\documentclass[12pt,a4paper,leqno,verbatim]{amsart}
%%%%%%%%%%%% METHOD FOR HOUR AND MINUTE %%%%%%%%%%%%%
%=====================================================================
\newcounter{minutes}\setcounter{minutes}{\time}
\divide\time by 60
\newcounter{hours}\setcounter{hours}{\time}
\multiply\time by 60 \addtocounter{minutes}{-\time}
%=====================================================================

\usepackage{amssymb}
\usepackage{hyperref}
\usepackage{graphicx}
\usepackage{color}

\date{}
\newfont{\cyrilic}{wncyr10 scaled 1000}
\title[On the inequalities of S\'andor mean]
{On the inequalities of S\'andor mean}

%
%\author[J. S\'andor]{J\'ozsef S\'andor}
%\address{Babe\c{s}-Bolyai University
%Department of Mathematics
%Str. Kogalniceanu nr. 1
%400084 Cluj-Napoca, Romania}
%\email{jsandor@math.ubbcluj.ro}

\author[B.A Bhayo]{Barkat Ali Bhayo}
\address{Koulutuskeskus Salpaus (Salpaus Further Education), 15110 Lahti, Finland\\}
\email{bhayo.barkat@gmail.com}

\newcommand{\comment}[1]{}

\swapnumbers
\theoremstyle{plain}

\newtheorem{theorem}[equation]{Theorem}
\newtheorem{theorem a}[equation]{Theorem A}
\newtheorem{theorem b}[equation]{Theorem B}
\newtheorem{lemma}[equation]{Lemma}

\newtheorem{corollary}[equation]{Corollary}

\newtheorem{remark}[equation]{Remark}

\numberwithin{equation}{section}

\pagestyle{headings}
\setcounter{page}{1}
\addtolength{\hoffset}{-1.15cm}
\addtolength{\textwidth}{2.5cm}
\addtolength{\voffset}{0.45cm}
\addtolength{\textheight}{-0.9cm}

\begin{document}

%%%%%%KAUNIS K  \K %%%%%%%%%%%%%%
%\font\fFt=eusm10 %scaled 1200
%\font\fFa=eusm7  %scaled 1200
%\font\fFp=eusm5  %scaled 1200
%\def\K{\mathchoice
 %%%%displaystyle
%{\hbox{\,\fFt K}}
%%%%%textstyle
%{\hbox{\,\fFt K}}
%%%%scriptstyle
%{\hbox{\,\fFa K}}
%%%%%scriptscriptstyle
%{\hbox{\,\fFp K}}}
%%%%%%%%%%%%%
%\def\E{\mathchoice
 %%%%displaystyle
%{\hbox{\,\fFt E}}
%%%%%textstyle
%{\hbox{\,\fFt E}}
%%%%scriptstyle
%{\hbox{\,\fFa E}}
%%%%%scriptscriptstyle
%{\hbox{\,\fFp E}}}
%%%%%%%%%%%%%

%
%
\def\thefootnote{}
\footnotetext{ \texttt{\tiny File:~\jobname .tex,
          printed: \number\year-\number\month-\number\day,
          \thehours.\ifnum\theminutes<10{0}\fi\theminutes}
} \makeatletter\def\thefootnote{\@arabic\c@footnote}\makeatother

%%%%%%%%%%%%%%%%%%%%%%%%%%%%%%%%%%%%%%%%%%%%%%%%%%%%%%%%%%%%%%%%%%%%%%%
%%%%%%%%%%%%%%%%%%%%%%%%%%%%%%%%%%%%%%%%%%%%%%%%%%%%%%%%%%%%%%

\begin{abstract}
In this paper author establishes the two sided inequalities for the 
following S\'andor means
$$X=X(a,b)=Ae^{G/P-1},\quad  Y=Y(a,b)=Ge^{L/A-1},$$
and other related means.
\end{abstract}

\maketitle

%%%%%%%%%%%%%%%%%%%%%%%%%%%%%%%%%%%%%%%%%%%%%%%%%%%%%%%%%%%%%%%%%%%%%%
\bigskip
{\bf 2010 Mathematics Subject Classification}: 26D05, 26D15, 26D99.

{\bf Keywords}: Inequalities, means of two arguments, trigonometric and hyperbolic functions, S\'andor mean.

%%%%%%%%%%%%%%%%%%%%%%%%%%%%%%%%%%%%%%%%%%%%%%%%%%%%%%%%%%%%%%%%%%%%%%%%%%%%%%%%%%%%%%%%%%%%%%%%%
%%%%%%%%%%%%%%%%%%%%%%%%%%%%%%%%%  Introduction   %%%%%%%%%%%%%%%%%%%%%%%%%%%%%%%%%%%%%%%%%%%%%%%
%%%%%%%%%%%%%%%%%%%%%%%%%%%%%%%%%%%%%%%%%%%%%%%%%%%%%%%%%%%%%%%%%%%%%%%%%%%%%%%%%%%%%%%%%%%%%%%%%

\section{\bf Introduction}
The study of the inequalities involving the classical means
such as arithmetic mean $A$, geometric mean $G$, identric mean $I$ and logarithmic mean $L$  has been of the extensive interest for several authors, e.g., see \cite{alzer1, alzer2,carlson,hasto,ns1004, ns1004a,sandorc,sandord,sandore,vvu}.

For two positive real numbers $a$ and $b$, the S\'andor mean 
$X(a,b)$ (see \cite{sandortwosharp})
is defined by
$$X=X(a,b)=Ae^{G/P-1},$$
where
$A=A(a,b)=(a+b)/2, G=G(a,b)=\sqrt{ab},$ and
$$P=P(a,b)=\frac{a-b}{2\displaystyle\arcsin\left(\frac{a-b}{a+b}\right)},\quad a\neq b,$$
are the arithmetic mean, geometric mean, and Seiffert mean 
\cite{seiff}, respectively.

Recently, S\'andor \cite{sandornew} introduced a new mean $Y(a,b)$ 
for two positive real $a$ and $b$, which is defined by
$$Y=Y(a,b)=Ge^{L/A-1},$$
where
$$L=L(a,b)=\frac{a-b}{\log(a)-\log(y)},\quad a\neq b$$
is a logarithmic mean. For two positive real numbers $a$ and $b$, the identric mean and harmonic mean are defined by
$$I=I(a,b)=\frac{1}{e}\left(\frac{a^a}{b^b}\right)^{1/(a-b)},\quad a\neq b,$$
and 
$$H=H(a,b)=2ab/(a+b),$$
respectively.
For the historical background and the generalization of these means we refer the reader to see, e.g, \cite{alzer2,carlson,mit,ns1004,ns1004a,sandor611,sandorc,sandord,sandore,sandor1405,sandorF,vvu}. %Generalizations, or related means are studied in %\cite{newmean,ns1004a,ns1004,ns1605,sandor611,sandornew,sandor1405}.
Connections of these means with the trigonometric or hyperbolic inequalities can be found in \cite{barsan,sandora,sandornew,sandore}.

%$$S=S(a,b)=  (a^a b^b)^{1/(a+b)}.$$
For $p\in \mathbb{R}$ and $a,b>$ with $a\neq b$, the $p$th power mean 
$M_p(a,b)$ and $p$th power-type Heronian mean $H_p$(a,b) are define by 

$$
M_p=M_p(a,b)=
\begin{cases}
\displaystyle\left(\frac{a^p+b^p}{2}\right)^{1/p}, & p\neq 0,\\
\sqrt{ab}, & p=0,
\end{cases}
$$
and 
$$
H_p=H_p(a,b)=
\begin{cases}
\displaystyle\left(\frac{a^p+(ab)^{p/2}+b^p}{3}\right)^{1/p}, & p\neq 0,\\
\sqrt{ab}, & p=0,
\end{cases}
$$
respectively.

In \cite{sandornew}, S\'andor  proved inequalities of $X$ and $Y$ means in terms of other classical means, as well as their relations with each other as follows. 

\begin{theorem}\label{sandortheorem} For $a,b>0$ with $a\neq b$, one has
\begin{enumerate}
\item $G<\frac{AG}{P}<X<\frac{AP}{2P-G}<P$,\\
\item $H<\frac{LG}{A}<Y<\frac{AG}{2A-L}<G$,\\
\item $1<\frac{L^2}{IG}<\frac{L\cdot e^{G/L-1}}{G}<\frac{PX}{AG}$,\\
\item $H<\frac{G^2}{I}<\frac{LG}{A}<\frac{G(A+L)}{3A-L}<Y$.
\end{enumerate}
\end{theorem}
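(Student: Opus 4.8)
The plan is to exploit homogeneity and collapse every chain to a one-variable inequality through two substitutions. For the means built on the Seiffert mean ($A,G,P,X$), write $(a-b)/(a+b)=\sin\theta$ with $\theta\in(0,\pi/2)$; a direct computation gives $G/A=\cos\theta$ and $P/A=(\sin\theta)/\theta$, hence $G/P=\theta\cot\theta=:u$ and $X/A=e^{u-1}$, with $u\in(0,1)$ because $\tan\theta>\theta$. For the logarithmic family ($G,A,L,I,Y,H$), write $a/b=e^{2t}$ with $t>0$; then $A/G=\cosh t$, $L/G=(\sinh t)/t$, $H/G=1/\cosh t$, $L/A=(\tanh t)/t=:v\in(0,1)$, $I/G=e^{t\coth t-1}$, and $Y/G=e^{v-1}$. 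Dividing each chain by the common homogeneous factor ($A$ in (1), $G$ in (2) and (4)) turns every term into an explicit elementary function of $\theta$ or $t$.

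The key structural observation is that the awkward upper bounds collapse: dividing $\frac{AP}{2P-G}$ by $A$ gives $\frac{1}{2-G/P}=\frac{1}{2-u}$, and dividing $\frac{AG}{2A-L}$ by $G$ gives $\frac{1}{2-L/A}=\frac{1}{2-v}$. Thus the two middle inequalities in both (1) and (2) are instances of a single master lemma: for $w\in(0,1)$ one has $w<e^{w-1}<\frac{1}{2-w}$. The left half follows from $e^{w-1}\ge w$ (strict for $w\ne1$), and the right half from monotonicity of $f(w)=(2-w)e^{w-1}$, whose derivative $(1-w)e^{w-1}$ is positive on $(0,1)$, so $f(w)<f(1)=1$. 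Applying this with $w=u$ settles $\frac{AG}{P}<X<\frac{AP}{2P-G}$, and with $w=v$ settles $\frac{LG}{A}<Y<\frac{AG}{2A-L}$. The outer inequalities are classical: in (1), $G<\frac{AG}{P}\Leftrightarrow P<A\Leftrightarrow\sin\theta<\theta$ and $\frac{AP}{2P-G}<P\Leftrightarrow P>\frac{A+G}{2}\Leftrightarrow\tan(\theta/2)>\theta/2$; in (2), $H<\frac{LG}{A}\Leftrightarrow G<L\Leftrightarrow t<\sinh t$ and $\frac{AG}{2A-L}<G\Leftrightarrow L<A$.

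For (4), three of the four links are soft. Since $H=G^2/A$, the leftmost reduces to $H<\frac{G^2}{I}\Leftrightarrow I<A$, and the relation $\frac{L}{A}<\frac{A+L}{3A-L}$ is just $(v-1)^2>0$ after clearing the positive denominator. The remaining two carry the content: $\frac{G^2}{I}<\frac{LG}{A}\Leftrightarrow AG<IL$, and $\frac{G(A+L)}{3A-L}<Y\Leftrightarrow\frac{1+v}{3-v}<e^{v-1}$ on $(0,1)$; both I would settle by an auxiliary monotonicity argument (in $t$, respectively $v$), checking the endpoint value where equality is attained as $t\to0$, i.e. $v\to1$. For (3), the last link is clean once one writes $\frac{PX}{AG}=\frac{P}{G}e^{G/P-1}$ and $\frac{Le^{G/L-1}}{G}=\frac{L}{G}e^{G/L-1}$, so it asks for $\phi(L)<\phi(P)$ with $\phi(m)=me^{G/m-1}$; since $(\log\phi)'(m)=(m-G)/m^2>0$ for $m>G$ and $L<P$ (equivalently $\arcsin(\tanh t)<t$), monotonicity gives it. The first two links of (3) reduce to the purely hyperbolic inequalities $\frac{L^2}{IG}>1$ and $\frac{L}{I}<e^{G/L-1}$.

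The main obstacle is the batch of genuinely sharp inequalities that escape the master lemma: $AG<IL$ and $\frac{1+v}{3-v}<e^{v-1}$ from (4), and above all $L^2>GI$ from (3), which is extremely tight (its ratio tends to $1$ with high-order contact as $t\to0$). Rewriting $L^2>GI$ as $2\log\frac{\sinh t}{t}>t\coth t-1$, the auxiliary function and several of its derivatives vanish at $t=0$, so a crude sign analysis fails; I expect to need either a sufficiently long Maclaurin expansion with an explicit remainder bound, or an iterated application of the monotone form of l'Hôpital's rule to fix the sign of the relevant late derivative. The same device—reducing to a function vanishing to high order at the endpoint and controlling the sign of a higher derivative—should dispatch the remaining tight estimates in (3) and (4).
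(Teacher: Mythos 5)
This theorem is not proved in the paper at all: it is imported verbatim from S\'andor \cite{sandornew}, so there is no in-paper proof to compare against. Your blind attempt is nevertheless sound and is built on exactly the machinery the paper itself uses elsewhere, namely the substitutions of Lemma \ref{lemma1} ($G/A=\cos\theta$, $P/A=\sin\theta/\theta$, $X/A=e^{\theta\cot\theta-1}$; $A/G=\cosh t$, $L/A=\tanh t/t$, $Y/G=e^{\tanh t/t-1}$, $I/G=e^{t\coth t-1}$). All of your reductions check out, and the master lemma $w<e^{w-1}<\tfrac{1}{2-w}$ on $(0,1)$ is the right engine: it disposes of the middle links of (1) and (2), while the outer links reduce to $\sin\theta<\theta$, $\tan(\theta/2)>\theta/2$, $t<\sinh t$ and $\tanh t<t$ as you say. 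One correction to your own assessment of difficulty: two of the items you list among the ``main obstacles'' are already consequences of your master lemma. First, $AG<IL$ is equivalent (divide by $G$, use $I/G=e^{t\coth t-1}$) to $e^{s-1}>s$ with $s=A/L=t\coth t>1$, i.e.\ just the left half of the lemma extended to $s>1$, where it still holds since $e^{x}>1+x$ for $x\neq0$. Second, $\tfrac{1+v}{3-v}<e^{v-1}$ on $(0,1)$ follows by one differentiation: the function $g(v)=(3-v)e^{v-1}-(1+v)$ satisfies $g(1)=0$ and $g'(v)=(2-v)e^{v-1}-1<0$ precisely by the right half of the master lemma, so $g>0$ on $(0,1)$. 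The only genuinely delicate link is $L^2>IG$ (Alzer's inequality, which the paper's Remark 2.2 derives from inequality \eqref{ineq5} of \cite{barsan3}); your proposed route works and can be made fully explicit: writing $\psi(t)=2\log(\sinh t/t)-t\coth t+1$ one gets $t\sinh^2(t)\,\psi'(t)=t\sinh t\cosh t-2\sinh^2 t+t^2$, whose Maclaurin coefficient of $t^{2n}$ equals $2^{2n}\bigl(\tfrac{n}{2}-1\bigr)/(2n)!\ge0$ for $n\ge2$ (and the lower-order terms cancel), so $\psi$ increases from $\psi(0^+)=0$. The neighbouring link $L/I<e^{G/L-1}$ is similarly tight (the difference of logarithms is of order $t^4$) and needs the same treatment; with that, your outline closes completely.
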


In \cite{barsan}, author and S\'andor gave a series expansion of $X$ and $Y$, and proved the following inequalities.
\begin{theorem}\label{sandorbhayo-theorem} For $a,b>0$ with $a\neq b$, one has
\begin{enumerate}
\item $\frac{1}{e}(G+H)< Y<\frac{1}{2}(G+H),$\\
\item $G^2I<IY<IG<L^2,$\\
\item $\frac{G-Y}{A-L} < \frac{Y+G}{2A}<  \frac{3G+H}{4A}  <1,$\\
\item $L<\frac{2G+A}{3}<X<L(X,A)<P<\frac{2A+G}{3}<I,$\\
\item $ 2\left(1- \frac{A}{P}\right)<\log \left(\frac{X}{A}\right) <
 \left(\frac{P}{A}\right)^2.$
\end{enumerate}
\end{theorem}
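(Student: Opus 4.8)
The plan is to remove the two free variables $a,b$ by homogeneity and symmetry: every quantity in the statement is symmetric and homogeneous of degree one, so I may take $a>b>0$ and pass to a single scalar. For the assertions organised around $X$ and $P$ (parts~(4)--(5)) I would set $(a-b)/(a+b)=\sin\theta$ with $\theta\in(0,\pi/2)$, which yields $G/A=\cos\theta$, $P/A=\sin\theta/\theta$ and $G/P=\theta\cot\theta$, hence
$$\frac{X}{A}=e^{\theta\cot\theta-1}.$$
For the assertions organised around $Y$ and $L$ (parts~(1)--(3)) I would instead put $a=e^{u}$, $b=e^{-u}$ with $u>0$, giving $A/G=\cosh u$, $H/G=\operatorname{sech}u$, $L/G=\sinh u/u$, $L/A=\tanh u/u$, $I/G=e^{u\coth u-1}$, and therefore
$$\frac{Y}{G}=e^{\tanh u/u-1}.$$
Each substitution converts its block of inequalities into a one-variable inequality on $(0,\pi/2)$ or on $(0,\infty)$.

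After this reduction a large part of the statement becomes bookkeeping. In part~(5) the two claims read $2(1-\theta/\sin\theta)<\theta\cot\theta-1$ and $\theta\cot\theta-1<\sin^{2}\theta/\theta^{2}$; the second is very loose and the first is equivalent to $3\sin\theta<\theta(2+\cos\theta)$. In part~(4) the link $P<(2A+G)/3$ is exactly the Cusa--Huygens inequality $\sin\theta/\theta<(2+\cos\theta)/3$, and $L<(2G+A)/3$ is its hyperbolic form $\sinh u/u<(2+\cosh u)/3$; moreover $X<L(X,A)$ follows at once from $X<A$ (itself immediate from $\theta\cot\theta-1<0$), since the logarithmic mean lies strictly between its arguments. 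In part~(3) the chain telescopes: $(3G+H)/(4A)<1$ is just $3G+H<4A$, and $(Y+G)/(2A)<(3G+H)/(4A)$ is equivalent to $2Y<G+H$, i.e. the right-hand inequality of part~(1). I would therefore first prove the purely classical comparisons (the two Cusa--Huygens inequalities, $3G+H<4A$, and the outer estimates of part~(2), which reduce to $Y<G$ and to comparisons of $I=e^{u\coth u-1}$ with powers of $L=\sinh u/u$), and only then use them as lemmas for the mixed inequalities; at this stage Theorem~\ref{sandortheorem} supplies, in particular, $X<P$, which is convenient.

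The inequalities that do not collapse are transcendental estimates of the shape ``exponential of a trigonometric or hyperbolic expression versus an algebraic combination'': the left inequality $\log(1+\operatorname{sech}u)<\tanh u/u$ of part~(1), and the exponential comparisons $\log((2\cos\theta+1)/3)<\theta\cot\theta-1$ and $\log((2\cosh u+1)/3)<u\coth u-1$ coming from $(2G+A)/3<X$ and $(2A+G)/3<I$. For these I would take logarithms and then apply two standard devices: Maclaurin expansion, reducing each claim to the eventual positivity of a power series, and the monotone form of l'H\^opital's rule, writing the difference as a quotient $f/g$ with $f(0)=g(0)=0$ and checking that $f'/g'$ is monotone. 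The functions $\theta\cot\theta$, $\tanh u/u$ and $u\coth u$ have classical monotone series, which feeds the l'H\^opital step cleanly.

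The main obstacle will be the two inequalities that compare an exponential mean with a mean of two means, namely $L(X,A)<P$ in part~(4) and $(G-Y)/(A-L)<(Y+G)/(2A)$ in part~(3). For $L(X,A)<P$ the crude bound $L(X,A)<(X+A)/2$ is \emph{not} enough --- a short expansion shows $(X+A)/(2A)>\sin\theta/\theta$ for small $\theta$ --- so I would instead feed in the sharper log-mean bound obtained by applying the already-proved Cusa--Huygens inequality to the pair $(X,A)$, namely $L(X,A)<\bigl(2\sqrt{XA}+(X+A)/2\bigr)/3$. Dividing by $A$ and using $\sqrt{XA}/A=e^{(\theta\cot\theta-1)/2}$ turns the target into the single-variable inequality
$$\frac{1}{3}\left(2\,e^{(\theta\cot\theta-1)/2}+\frac{e^{\theta\cot\theta-1}+1}{2}\right)<\frac{\sin\theta}{\theta},$$
which I would settle by series comparison (the leading discrepancy appears only at order $\theta^{4}$, so the expansion must be carried far enough) backed up by a monotonicity argument near $\theta=\pi/2$. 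The inequality in part~(3) is of the same type: under the hyperbolic substitution it pits $(1-e^{\tanh u/u-1})/(\cosh u-\sinh u/u)$ against $(1+e^{\tanh u/u-1})/(2\cosh u)$, and I would again replace the exponential by a rational surrogate before invoking monotonicity. Producing these two surrogate estimates with enough sharpness, rather than the routine reductions in the other parts, is where the genuine difficulty lies.
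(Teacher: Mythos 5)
First, a point of reference: the paper you are working against does not prove Theorem \ref{sandorbhayo-theorem} at all --- it is quoted from the earlier paper \cite{barsan} of the author and S\'andor, so there is no in-paper argument to compare with. Your one-variable reduction is exactly the parametrization the paper records as Lemma \ref{lemma1} and uses everywhere else, and your bookkeeping is largely correct: the two Cusa--Huygens inequalities for $L<(2G+A)/3$ and $P<(2A+G)/3$, the triviality of $\log(X/A)<(P/A)^2$ (negative versus positive), $X<A\Rightarrow X<L(X,A)$, $3G+H<4A$, and the telescoping of part (3) onto $2Y<G+H$ are all right. (Incidentally, the first link of part (2) as printed, $G^2I<IY$, is a typo for $G^2<IY$, which reduces to $u\coth u+\tanh u/u>2$.)

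There are, however, three concrete shortfalls. First, you classify $(G-Y)/(A-L)<(Y+G)/(2A)$ as one of the two genuinely hard steps and propose a rational surrogate for the exponential; but $Y=Ge^{L/A-1}$ gives $\log G-\log Y=(A-L)/A$, hence
$$\frac{G-Y}{A-L}=\frac{1}{A}\cdot\frac{G-Y}{\log G-\log Y}=\frac{L(Y,G)}{A},$$
so the inequality is precisely the classical $L(Y,G)<\tfrac{1}{2}(Y+G)$ and needs no asymptotics at all. Second, the right-hand inequality of part (1), $2Y<G+H$, i.e. $\tanh u/u-1<\log\bigl((1+\mathrm{sech}\,u)/2\bigr)$, is load-bearing --- you invoke it to get the middle link of (3) --- yet it never appears on your list of transcendental estimates to be proved; only the left inequality of (1) does. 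Third, the estimates you do isolate as the main obstacles, above all $L(X,A)<P$ via $L(X,A)<\bigl(2\sqrt{XA}+\tfrac{1}{2}(X+A)\bigr)/3$, are left at the level of ``carry the expansion far enough and check monotonicity near $\pi/2$''. The numerology does work out (the order-$\theta^4$ deficit is $1/120-1/135>0$ and the endpoint value $\approx 0.6323$ is below $2/\pi\approx 0.6366$), so the route is viable, but as written these steps are plans rather than proofs; the proposal is a sound strategy whose hardest components remain open.
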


In \cite{chuet}, Chu et al. proved that the following double inequality 
\begin{equation}\label{89ineq}
M_p<X<M_q
\end{equation}
holds for all $a,b>0$ with $a\neq b$ if and only if $p\leq 1/3$ and
$q\geq \log (2)/(1+\log(2))\approx 0.4093.$

Recently, Zhou et al. \cite{zhouet} proved that for all $a,b>0$ with $a\neq b$,
the following double inequality
$$H_\alpha < X < H_\beta$$
holds if and only if $\alpha \leq 1/2$ and $\beta\geq \log(3)/(1+\log(2))
\approx 0.6488$.

Making contribution to the topic, in this paper author refines some previous results appeared in \cite{ barsan,sandornew} by giving the following theorems.  

\begin{theorem}\label{thm1} For $a,b>0$, we have
\begin{equation}\label{30ineqa}
\alpha G+(1-\alpha)A<X<\beta G+(1-\beta)A,
\end{equation}
with best possible constants $\alpha=2/3\approx 0.6667$ and $\beta =(e-1)/e\approx 0.6321$,

and

\begin{equation}\label{30ineqb}
A+G-\alpha_1 P<X<A+G-\beta_1 P,
\end{equation}
with best possible constants $\alpha_1=1$ and $\beta_1 =\pi(e-1)/(2e)\approx 0.9929$.
\end{theorem}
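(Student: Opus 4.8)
The plan is to exploit homogeneity and a trigonometric substitution to reduce each two-sided inequality to the monotonicity of a single-variable function, and then to read off the best constants as boundary limits. Since all of the means are homogeneous of degree one, I may normalize $A=1$ and write $(a-b)/(a+b)=\sin\theta$ with $\theta\in(0,\pi/2)$. A direct computation then gives $G=\cos\theta$, $P=\sin\theta/\theta$, and $G/P=\theta\cot\theta$, so that $X=e^{\theta\cot\theta-1}$.

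With this substitution, the left and right halves of \eqref{30ineqa} are together equivalent to the statement that
$$f(\theta):=\frac{A-X}{A-G}=\frac{1-e^{\theta\cot\theta-1}}{1-\cos\theta}$$
takes values strictly between $(e-1)/e$ and $2/3$, the best constants being $\beta=\inf f$ and $\alpha=\sup f$. A Taylor expansion of $\theta\cot\theta$ at the origin gives $\lim_{\theta\to0^+}f=2/3$, while the other end is the elementary evaluation $\lim_{\theta\to(\pi/2)^-}f=(e-1)/e$. Thus everything comes down to showing that $f$ is strictly decreasing on $(0,\pi/2)$. Writing $f=u/v$ with $u=1-e^{\theta\cot\theta-1}$ and $v=1-\cos\theta$ (both vanishing at $0$), I would apply the monotone form of l'H\^opital's rule: it suffices to prove that $u'/v'$ is decreasing. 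After differentiating one finds $u'/v'=e^{\theta\cot\theta-1}(\theta-\sin\theta\cos\theta)/\sin^3\theta$, and a short computation shows that its derivative is negative precisely when
$$g(\theta):=\theta^2+\theta\sin\theta\cos\theta-2\sin^2\theta>0.$$

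The same inequality turns out to govern \eqref{30ineqb}. Setting $h(\theta):=(A+G-X)/P=\theta(1+\cos\theta-e^{\theta\cot\theta-1})/\sin\theta$, the two halves of \eqref{30ineqb} are equivalent to $\pi(e-1)/(2e)<h(\theta)<1$, with the endpoints $\lim_{\theta\to0^+}h=1$ and $\lim_{\theta\to(\pi/2)^-}h=\pi(e-1)/(2e)$ supplying the best constants $\alpha_1=1$ and $\beta_1=\pi(e-1)/(2e)$. Differentiating $h$ and factoring out the negative quantity $\sin\theta-\theta$, the monotonicity $h'<0$ reduces to the positivity of $g(\theta)$ once more: after multiplying through by the positive factor $\sin^2\theta(\sin\theta+\theta)$ the exponential cancels identically and what remains is exactly $g(\theta)$. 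I would therefore isolate the single lemma $g>0$ on $(0,\pi/2)$ and deduce both parts of the theorem from it.

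The main obstacle is thus the purely trigonometric inequality $g(\theta)>0$; the pleasant point is that, although $X$ carries an exponential factor, this factor drops out of both reductions and leaves an elementary statement. To prove $g>0$ I would substitute $x=2\theta\in(0,\pi)$, which recasts $g>0$ as $G_1(x):=x^2+x\sin x+4\cos x-4>0$. Here the successive-antiderivative technique applies cleanly: one checks $G_1(0)=G_1'(0)=G_1''(0)=G_1'''(0)=0$ together with $G_1''''(x)=x\sin x>0$ on $(0,\pi)$, so integrating upward four times yields $G_1>0$, hence $g>0$. Strict monotonicity of $f$ and $h$ then delivers both double inequalities, and the boundary limits computed above certify that the constants $2/3$, $(e-1)/e$, $1$, and $\pi(e-1)/(2e)$ cannot be improved.
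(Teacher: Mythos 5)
Your proposal follows essentially the same route as the paper: the trigonometric substitution is the paper's Lemma \ref{lemma1}, your $f(\theta)=(A-X)/(A-G)$ is exactly the function of Lemma \ref{lemma2} treated by the same monotone form of l'H\^opital's rule, and your $h(\theta)=(A+G-X)/P$ is the reciprocal of the function $f_4$ of Lemma \ref{lemma3}. Both reductions land on the same trigonometric inequality: your $g(\theta)=\theta^2+\theta\sin\theta\cos\theta-2\sin^2\theta>0$ is the paper's condition $c(x)=x\left(\cot x+x\csc^2 x\right)>2$ cleared of denominators. There are two points of divergence. First, you prove $g>0$ by four successive differentiations of $G_1(x)=x^2+x\sin x+4\cos x-4$ (indeed $g(\theta)=\tfrac14 G_1(2\theta)$ and $G_1^{(4)}(x)=x\sin x$); this is self-contained and arguably cleaner than the paper's derivation from the Cusa--Huygens inequality combined with $(1-\cos x)^2\ge 0$. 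Second, your description of the passage from $h'<0$ to $g>0$ is too loose to stand as written: a direct computation gives $h'(\theta)=\frac{\sin\theta-\theta}{\sin^2\theta}\left(1+\cos\theta-\frac{(\theta+\sin\theta)\,e^{\theta\cot\theta-1}}{\sin\theta}\right)$, and the exponential does \emph{not} cancel after merely ``multiplying through'' by $\sin^2\theta(\sin\theta+\theta)$. One must, as in the paper's Lemma \ref{lemma3}, pass to the logarithm of the ratio $\frac{(\theta+\sin\theta)e^{\theta\cot\theta-1}}{\sin\theta(1+\cos\theta)}$, note that it tends to $0$ as $\theta\to 0^+$, and differentiate once more; only then does the derivative equal $-g(\theta)/\left(\sin^2\theta\,(\theta+\sin\theta)\right)$ and the exponential disappear. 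This is a one-line repair using a technique you already employ elsewhere, but as stated that step is an assertion rather than a proof. The boundary limits certifying the optimality of $2/3$, $(e-1)/e$, $1$ and $\pi(e-1)/(2e)$ all check out.
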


\begin{theorem}\label{0208thm} For $a,b>0$, we have
$$a\sqrt{GH}<Y<\sqrt{GH},$$
where $a\approx 0.9756$.
\end{theorem}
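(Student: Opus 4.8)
The plan is to use the common homogeneity of all the means to collapse the two variable problem to one explicit real function, and then to settle matters by a monotonicity analysis. Since $Y$, $G$, $H$, $A$, $L$ are homogeneous of degree one we may normalise $ab=1$ and set $a=e^{t}$, $b=e^{-t}$ with $t>0$. This sends the means to hyperbolic functions,
\[
G=1,\qquad A=\cosh t,\qquad H=\frac{2ab}{a+b}=\frac{1}{\cosh t},\qquad L=\frac{a-b}{\log a-\log b}=\frac{\sinh t}{t},
\]
so that $L/A=\tanh t/t$ and hence
\[
Y=G\,e^{L/A-1}=e^{\tanh t/t-1},\qquad \sqrt{GH}=(\cosh t)^{-1/2}.
\]
Thus $a\sqrt{GH}<Y<\sqrt{GH}$ is equivalent to $a<F(t)<1$ on $(0,\infty)$, where $F(t)=Y/\sqrt{GH}=e^{\tanh t/t-1}(\cosh t)^{1/2}$. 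I would also note that, by the arithmetic--geometric mean inequality $\sqrt{GH}\le\tfrac12(G+H)$, the right hand bound is a sharpening of the known estimate $Y<\tfrac12(G+H)$ from Theorem \ref{sandorbhayo-theorem}(1).

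Passing to logarithms reduces everything to the single function
\[
\phi(t)=\log F(t)=\frac{\tanh t}{t}-1+\tfrac12\log\cosh t,
\]
for which the goal is $\log a\le\phi(t)\le 0$ on $(0,\infty)$. A short Taylor expansion gives $\phi(t)\to 0$ as $t\to 0^{+}$, so the upper constant $1$ is exactly the limiting value of $F$ at $a=b$ and is the only possible sharp choice; the sharp lower constant must be $a=\exp\bigl(\inf_{t>0}\phi(t)\bigr)$. Hence the whole statement is encoded in the shape of $\phi$, and it suffices to locate its global minimum and to verify that it stays at or below the level $0$. To do this I would differentiate,
\[
\phi'(t)=\frac{1}{t^{2}}\left(\frac{t}{\cosh^{2}t}-\tanh t\right)+\frac{1}{2}\tanh t,
\]
multiply by $t^{2}$ to clear denominators, and determine the sign of the resulting smooth function either by the monotone form of l'H\^{o}pital's rule or by comparing the power series of $\tanh t$, $\cosh^{-2}t$ and $\log\cosh t$ coefficientwise. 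The aim is to show that $\phi'$ has a single zero $t_{0}$, negative before it and positive after, so that $\phi$ has a unique interior minimum; solving $\phi'(t_{0})=0$ numerically then gives $t_{0}\approx 0.8$ and $a=e^{\phi(t_{0})}\approx 0.9756$, the asserted best constant.

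The main obstacle is the sign analysis of $\phi'$, and within it the right hand inequality is the delicate one: as $t\to\infty$ we have $\tanh t/t\to 0$ while $\tfrac12\log\cosh t\to\infty$, so $\phi$ is the sum of a bounded decreasing term and an unbounded increasing one, and keeping the combination at or below $0$ cannot follow from any local or crude estimate—it would require a careful global monotonicity argument, and this asymptotic tension is exactly the point that must be checked most carefully. Once the single-minimum picture for $\phi$ is established, both inequalities together with the sharpness of the two constants follow at once from the endpoint values $\phi(0^{+})=0$ and $\phi(t_{0})=\log a$.
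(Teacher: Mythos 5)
Your reduction is sound and is in substance identical to the paper's: with $ab=1$, $a=e^{t}$, $b=e^{-t}$ you recover exactly the parametrization of Lemma \ref{lemma1} (with $y=t$), and the whole claim becomes $a<F(t)<1$ for $F(t)=\sqrt{\cosh t}\,e^{\tanh t/t-1}$. Your treatment of the \emph{lower} bound — a unique interior minimum of $\phi(t)=\tanh t/t-1+\tfrac12\log\cosh t$ at $t_{0}\approx 0.82$ with $e^{\phi(t_{0})}\approx 0.9756$ — is the right picture; the paper does not carry this out but simply cites \cite[Theorem 7(2)]{barsan} for that half.

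The upper bound, however, is where your plan breaks down, and it cannot be repaired: the inequality $\phi(t)\le 0$ on $(0,\infty)$ is false. Indeed $\phi(t)\ge -1+\tfrac12\log\cosh t\to+\infty$, and already $F(2)=\sqrt{\cosh 2}\,e^{\tanh 2/2-1}\approx 1.155>1$; concretely, for $a/b=e^{4}\approx 54.6$ one gets $Y\approx 0.596\,G$ while $\sqrt{GH}\approx 0.516\,G$, so $Y>\sqrt{GH}$. You correctly flagged the asymptotic tension between the bounded decreasing term and the unbounded increasing one, but then deferred it to ``a careful global monotonicity argument'' that does not exist, and your closing sentence — that both inequalities follow from $\phi(0^{+})=0$ and $\phi(t_{0})=\log a$ — is a non sequitur: a unique interior minimum together with $\phi(0^{+})=0$ gives the lower bound but says nothing about an upper bound, since $\sup_{t>0}\phi=+\infty$ is not attained at $t=0^{+}$. (The same defect sits in the paper's own proof: it rests on the quoted inequality $\sqrt{\cosh x}\,e^{\tanh x/x-1}<1$ from \cite{barsan3}, which likewise fails for large $x$.) The correct sharp conclusion from your setup is one-sided: $\inf_{t>0}F(t)\approx 0.9756$ so $a\sqrt{GH}<Y$ holds with that best constant, whereas $Y<\sqrt{GH}$ is valid only for $t\lesssim 1.28$, i.e.\ roughly $a/b\lesssim 13$, and no constant multiple of $\sqrt{GH}$ bounds $Y$ from above for all $a,b$.
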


\begin{theorem}\label{thm3} For $a,b>0$, we have
\begin{equation}\label{today}
\left(\frac{2+G/A}{2+A/G}\right)^3<\frac{H}{A}<
\left(\frac{2+G/A}{2+A/G}\right)^2,
\end{equation}

\begin{equation}\label{ineq0208c}
\frac{G}{L}<\left(\frac{2}{1+A/G}\right)^{2/3}<
\left(\frac{1+G/A}{2}\right)^{2/3}<\frac{P}{A}.
\end{equation}
\end{theorem}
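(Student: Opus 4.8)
The plan is to reduce every inequality to a one-variable statement. Throughout assume $a\neq b$ and put $t=G/A$, so that $t\in(0,1)$, with $t\to 1$ as $b\to a$. The crucial elementary identity is $H=2ab/(a+b)=G^2/A$, hence $H/A=(G/A)^2=t^2$ and $\tfrac{2+G/A}{2+A/G}=\tfrac{2+t}{2+1/t}=\tfrac{t(2+t)}{2t+1}=:f(t)$. With this, both sides of \eqref{today} become rational in $t$.

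For \eqref{today}, the right-hand inequality $H/A<f(t)^2$ reads $t^2<f(t)^2$, equivalently $t<f(t)$ since both are positive; and $f(t)-t=\tfrac{t(1-t)}{2t+1}>0$ on $(0,1)$, which settles it. The left-hand inequality $f(t)^3<t^2$ is, after clearing denominators and cancelling $t^2$, the polynomial inequality $t(2+t)^3<(2t+1)^3$. I would expand and factor the difference, obtaining $(2t+1)^3-t(2+t)^3=-(t^4-2t^3+2t-1)=-(t-1)^3(t+1)>0$ for $t\in(0,1)$, which completes \eqref{today}.

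For \eqref{ineq0208c} the middle inequality is again purely algebraic: raising to the power $3/2$ (monotone on positives) turns it into $\tfrac{2t}{1+t}<\tfrac{1+t}{2}$, i.e.\ $4t<(1+t)^2$, i.e.\ $(1-t)^2>0$. For the two outer inequalities I would pass to the hyperbolic parametrization $a/b=e^{2s}$, $s>0$, so that $a=Ge^{s}$, $b=Ge^{-s}$. A direct computation gives $t=G/A=1/\cosh s$, $G/L=s/\sinh s$, and, using $(a-b)/(a+b)=\tanh s$, also $P/A=\tanh s/\arcsin(\tanh s)$. The left outer inequality $G/L<\bigl(\tfrac{2}{1+A/G}\bigr)^{2/3}$ becomes, via $\tfrac{2}{1+1/t}=\tfrac{2}{1+\cosh s}=\cosh^{-2}(s/2)$ and the substitution $u=s/2$, the Lazarevi\'c inequality $\bigl(\tfrac{\sinh u}{u}\bigr)^3>\cosh u$ for $u>0$. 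The right outer inequality $\bigl(\tfrac{1+G/A}{2}\bigr)^{2/3}<P/A$ becomes, after writing $\tanh s=\sin\theta$ so that $t=\cos\theta$ and $P/A=\sin\theta/\theta$, and then setting $w=\theta/2$, the Mitrinovi\'c--Adamovi\'c inequality $\bigl(\tfrac{\sin w}{w}\bigr)^3>\cos w$ for $0<w<\pi/4$.

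The routine part is the bookkeeping of these substitutions; the genuine content lies in the two classical named inequalities of Lazarevi\'c and of Mitrinovi\'c--Adamovi\'c, and I expect their verification to be the main obstacle. Either can be cited from the literature on hyperbolic and trigonometric inequalities, or proved from scratch by comparing the Maclaurin coefficients of both sides (the first nonzero difference appears only at order four, e.g.\ $(\sinh u/u)^3-\cosh u=\tfrac{1}{15}u^4+O(u^6)$), supplemented by a monotonicity argument of l'Hospital type to propagate positivity across the whole interval. Finally, strictness for $a\neq b$ is automatic, since every reduction terminates in a strict inequality on the open parameter range $t\in(0,1)$, $s>0$.
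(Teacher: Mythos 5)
Your argument is correct, and it splits naturally into two halves that relate differently to the paper. For \eqref{ineq0208c} you follow essentially the paper's route: the middle inequality is reduced in both cases to $(1-t)^2>0$, and the two outer inequalities are reduced (via the parametrizations $G/A=\cos\theta$, $P/A=\sin\theta/\theta$ and $A/G=\cosh s$, $L/G=\sinh s/s$ of Lemma \ref{lemma1}) to the half-argument forms $\bigl(\tfrac{1+\cos\theta}{2}\bigr)^{2/3}<\tfrac{\sin\theta}{\theta}$ and $\bigl(\tfrac{1+\cosh s}{2}\bigr)^{2/3}<\tfrac{\sinh s}{s}$, which are exactly the Adamovi\'c--Mitrinovi\'c and Lazarevi\'c inequalities you name; the paper simply cites these, while you additionally sketch how to prove them, which is fine. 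For \eqref{today}, however, your proof is genuinely different and more elementary: writing $t=G/A$, $H/A=t^2$, you reduce everything to the polynomial identities $f(t)-t=\tfrac{t(1-t)}{2t+1}$ and $(2t+1)^3-t(2+t)^3=(1-t)^3(t+1)$, both of which check out. The paper instead studies the ratio $g(x)=\log\bigl(\tfrac{2+1/\cos x}{2+\cos x}\bigr)/\log\bigl(\tfrac{1}{\cos x}\bigr)$ with the monotone l'H\^opital rule (Lemma \ref{lem0}) and shows it increases from $2/3$ to $1$ on $(0,\pi/2)$. What the paper's heavier machinery buys is sharpness: the limits $2/3$ and $1$ show that the exponents $3$ and $2$ in \eqref{today} cannot be improved, a fact your algebraic computation does not address. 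Since the theorem as stated makes no optimality claim, this is not a gap, but it is worth noting if you care about best constants.
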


\begin{theorem}\label{thm30} We have
$$(AX)^{1/\alpha_2}<P<(AX^{\beta_2})^{1/(1+\beta_2)},$$
with best possible constants $\alpha_2=2$ and $\beta_2=\log(\pi/2)/\log(2e/\pi)\approx 0.8234.$
\end{theorem}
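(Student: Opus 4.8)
The plan is to exploit the fact that $A$, $G$, $P$ and $X$ are all homogeneous of degree one, so the two inequalities are scale invariant only when the homogeneity degrees of the two sides agree. For the left inequality $(AX)^{1/\alpha_2}<P$ the two sides have degrees $2/\alpha_2$ and $1$; letting $a,b$ scale shows the inequality can hold for all $a,b>0$ only if $\alpha_2=2$, which already forces the stated value (its sharpness is recovered below). Assuming without loss of generality $a>b$, I would set $t=(a-b)/(a+b)\in(0,1)$ and substitute $t=\sin x$ with $x\in(0,\pi/2)$. A direct computation then gives $G/A=\cos x$, $P/A=\sin x/x$ and $X/A=\exp(x\cot x-1)$, so both inequalities reduce to statements about the single function
$$g(x)=\frac{\log(x/\sin x)}{1-x\cot x},\qquad x\in(0,\pi/2).$$

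Writing $u=\log(x/\sin x)>0$ and $v=1-x\cot x>0$, the left inequality $AX<P^2$ becomes $2u<v$, i.e. $g(x)<1/2$, while the right inequality $P<(AX^{\beta_2})^{1/(1+\beta_2)}$ becomes $g(x)>\beta_2/(1+\beta_2)$. Since $\beta_2=\log(\pi/2)/\log(2e/\pi)$ is equivalent to $\beta_2/(1+\beta_2)=\log(\pi/2)$, the whole theorem follows from the single claim that $g$ is strictly decreasing on $(0,\pi/2)$ together with the boundary values $\lim_{x\to 0^+}g(x)=1/2$ and $\lim_{x\to(\pi/2)^-}g(x)=\log(\pi/2)$; these limits I would read off from $\log(x/\sin x)\sim x^2/6$, $1-x\cot x\sim x^2/3$ as $x\to0$, and from $\sin x/x\to 2/\pi$, $x\cot x\to0$ as $x\to\pi/2$. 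Monotonicity then gives both inequalities at once, and the fact that the two bounds are attained only in the limits $a\to b$ and $b\to 0$ respectively shows that $\alpha_2=2$ and $\beta_2$ are best possible.

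The heart of the argument is the monotonicity of $g$, which I would establish by the monotone form of l'Hôpital's rule applied three times. With $n(x)=\log(x/\sin x)$ and $d(x)=1-x\cot x$ one has $n(0)=d(0)=0$ and, after simplification,
$$\frac{n'(x)}{d'(x)}=\frac{\sin x\,(\sin x-x\cos x)}{x\,(x-\sin x\cos x)}.$$
Setting $y=2x$ this equals $N(y)/D(y)$ with $N=2-2\cos y-y\sin y$, $D=y^2-y\sin y$; differentiating numerator and denominator twice more keeps the quotients of the form $0/0$ at the origin and collapses to
$$\frac{y\sin y}{2-2\cos y+y\sin y}=\frac{1}{1+\psi(y)},\qquad \psi(y)=\frac{2(1-\cos y)}{y\sin y}=\frac{\tan(y/2)}{y/2}.$$
Since $\tan(s)/s$ is increasing on $(0,\pi/2)$, the last ratio is decreasing; unwinding the three applications of the rule yields that $N'/D'$, hence $N/D=n'/d'$, hence $n/d=g$, is strictly decreasing.

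I expect the main obstacle to be organizing this iterated reduction so that every intermediate quotient is genuinely of the form $0/0$ and every denominator keeps a constant sign, since each application of the monotone l'Hôpital rule is licensed only then. Concretely I would have to verify $x-\sin x\cos x>0$, $2y-\sin y-y\cos y>0$ and $2-2\cos y+y\sin y>0$ on the relevant intervals $(0,\pi/2)$ and $(0,\pi)$; these are routine (the first from $2x>\sin 2x$, the last two since $D''>0$ forces $D'>0$ from $D'(0)=0$), but they must be checked to secure each step. Once the reduction lands on $\tan(s)/s$ the remaining monotonicity is classical, and the theorem follows.
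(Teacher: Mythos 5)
Your proof is correct, and it reaches the theorem by a route that is genuinely different in its details from the paper's, though the skeleton (reduce via the representation $P/A=\sin x/x$, $X/A=e^{x\cot x-1}$ to the monotonicity of a single quotient on $(0,\pi/2)$, then apply the monotone l'H\^opital rule) is the same. The paper deduces the theorem in one line from Lemma \ref{lemma5}, which studies $h(x)=\log(x/\sin x)/\log\bigl(e^{1-x\cot x}\sin x/x\bigr)=u/(v-u)$ with $u=\log(x/\sin x)$, $v=1-x\cot x$, and shows $h$ decreases from $1$ to $\beta_2$; your $g=u/v$ is the monotone reparametrization $g=h/(1+h)$, so the two monotonicity claims are equivalent, with your thresholds $1/2$ and $\log(\pi/2)$ corresponding to the paper's $1$ and $\beta_2$. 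The substantive divergence is in how the monotonicity of the derivative ratio is proved: after one application of the rule the paper expands $1-x\cot x$ and $(x/\sin x)^2-1$ in Bernoulli-number series and invokes the Biernacki--Krzy\.z result (Lemma \ref{lembk}), whereas you iterate the monotone rule twice more and collapse everything onto $\tan(y/2)/(y/2)$, which is elementary and self-contained at the price of the sign checks you list --- all of which do hold ($x-\sin x\cos x>0$ from $2x>\sin 2x$; $D''=2-2\cos y+y\sin y>0$ on $(0,\pi)$, whence $D'>0$ from $D'(0)=0$). Your homogeneity argument forcing $\alpha_2=2$, the identity $\beta_2/(1+\beta_2)=\log(\pi/2)$, the boundary limits, and the sharpness conclusion are all correct. (Incidentally, your formulation sidesteps a typo in the displayed ``in particular'' inequality of Lemma \ref{lemma5}, where $e^{x/\tan(x)-1}$ appears in place of the $e^{1-x/\tan(x)}$ needed for consistency with $h$.)
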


The first inequality in \eqref{30ineqb} was proved by S\'andor (see \cite[Theorem 2.10]{sandornew}). The left side of \eqref{30ineqa} is less that the left side of \eqref{30ineqb}, which follows from the inequality 
$$P<\frac{2A+G}{3},$$
(see \cite{sandor1405}). Inequalities in \eqref{ineq0208c} refine the inequalities in \cite[Theorem 2.1]{sandornew}.

This paper is organized as follow: In Section 1, we give the introduction and state the main result. In Section 2, some connections of well-known trigonometric and hyperbolic inequalities with the inequalities of classical means are given. Section 3 deals with the lemmas which will be used in the proof of the theorems. Section 4 consists of the proofs of the theorems.
%%%%%%%%%%%%%%%%%%%%%%%%%%%%%%%%%%%%%%%%%
%%%%%%%%%%%%%%%%%%%%%%%%%%%%%%%%%%%%%%%%%%%%
%%%%%%%%%%%%%%%%%%%%%%%%%%%%%%%%%%%%%%%
\section{Connection with trigonometric functions}
For easy reference we recall the following lemma from \cite{barsan,
barsan2}.
\begin{lemma}\label{lemma1} For $a>b>0$, $x\in(0,\pi/2)$ and $y>0$, one has
\begin{equation}\label{ineq1}
\frac{P}{A}= \frac{\sin (x)}{x},\ \frac{G}{A} = \cos(x),\, \frac{H}{A}= \cos(x)^2,\   
\frac{X}{A}= e^{x {\rm cot}(x)-1},  
\end{equation}

\begin{equation}\label{ineq2} 
\frac{L}{G}= \frac{\sinh (y)}{y},\, \frac{L}{A}= \frac{\tanh (y)}{y},\
 \frac{H}{G}= \frac{1}{\cosh (y)},\,  
\frac{Y}{G}= e^{\tanh (y)/y -1}. 
\end{equation}

\begin{equation}\label{ineq3} 
\log\left(\frac{I}{G}\right)=\frac{A}{L}-1,\quad 
\log\left(\frac{Y}{G}\right)=\frac{L}{A}-1.
\end{equation}
\end{lemma}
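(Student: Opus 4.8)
The plan is to reduce every identity to a single parametrization of the pair $(a,b)$ and then verify each formula by direct substitution. Since all the quantities involved are ratios of means, hence homogeneous of degree zero, I may work throughout with the single scale-invariant variable $t=(a-b)/(a+b)\in(0,1)$ for the trigonometric block \eqref{ineq1} and with $y=\tfrac12\log(a/b)>0$ for the hyperbolic block \eqref{ineq2}.

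For \eqref{ineq1} I would introduce $x\in(0,\pi/2)$ by setting $\sin(x)=(a-b)/(a+b)$. Then $\cos(x)=\sqrt{1-\sin^2(x)}=2\sqrt{ab}/(a+b)=G/A$, which is the second identity, and squaring gives $\cos^2(x)=4ab/(a+b)^2=H/A$, the third. For the Seiffert mean, the definition gives $P/A=t/\arcsin(t)$ with $t=\sin(x)$, so that $\arcsin(t)=x$ and $P/A=\sin(x)/x$. Finally, from $X=Ae^{G/P-1}$ one has $X/A=e^{G/P-1}$, and $G/P=(G/A)/(P/A)=\cos(x)\cdot x/\sin(x)=x\cot(x)$, which yields the last identity.

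For \eqref{ineq2} I would set $y=\tfrac12\log(a/b)$, so that $a/b=e^{2y}$ and $\log(a)-\log(b)=2y$. Dividing numerator and denominator by $\sqrt{ab}$ gives $(a-b)/\sqrt{ab}=e^{y}-e^{-y}=2\sinh(y)$ and $(a+b)/\sqrt{ab}=e^{y}+e^{-y}=2\cosh(y)$. Hence $L/G=(a-b)/(2y\sqrt{ab})=\sinh(y)/y$ and $H/G=2\sqrt{ab}/(a+b)=1/\cosh(y)$. Multiplying, $L/A=(L/G)(G/A)=\tanh(y)/y$, where $G/A=H/G=1/\cosh(y)$, and then $Y/G=e^{L/A-1}=e^{\tanh(y)/y-1}$ directly from the definition of $Y$.

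The identities \eqref{ineq3} follow from the definitions: $\log(Y/G)=L/A-1$ is merely the defining relation of $Y$. For $\log(I/G)=A/L-1$ I would write $\log(I)=-1+(a\log(a)-b\log(b))/(a-b)$ and $\log(G)=\tfrac12(\log(a)+\log(b))$; the only genuine computation in the whole lemma is to check algebraically that $(a\log(a)-b\log(b))/(a-b)-\tfrac12(\log(a)+\log(b))$ collapses to $(a+b)(\log(a)-\log(b))/(2(a-b))=A/L$. Writing $p=\log(a)$, $q=\log(b)$ and clearing the common denominator $2(a-b)$ reduces this to the trivial identity $a(p-q)+b(p-q)=(a+b)(p-q)$. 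I expect this last algebraic simplification to be the only step requiring any care; every other identity is a one-line substitution once the parameters $x$ and $y$ are in place.
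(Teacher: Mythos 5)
Your verification is correct and complete: the parametrizations $\sin(x)=(a-b)/(a+b)$ and $y=\tfrac12\log(a/b)$ are exactly the right choices, each identity in \eqref{ineq1} and \eqref{ineq2} does reduce to a one-line substitution, and your algebraic check that $\tfrac{a\log(a)-b\log(b)}{a-b}-\tfrac12(\log(a)+\log(b))=A/L$ correctly establishes $\log(I/G)=A/L-1$. Note that the paper itself offers no proof of this lemma, merely recalling it from the cited earlier works of the author and S\'andor, so your direct verification is the standard argument one would expect to find there and there is nothing to contrast it with.
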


\begin{remark}\rm  Recently, the following inequality
\begin{equation}\label{ineq5}
e^{(x/\tanh(x)-1)/2}<\frac{\sinh(x)}{x},\quad x>0,
\end{equation}
appeared in \cite[Theorem 1.6]{barsan3}, which is equivalent to 
$$\frac{\sinh(x)}{x}>e^{x/\tanh(x)-1}\frac{x}{\sinh(x)}.$$ By Lemma \ref{lemma1}, this can be written as
$$\frac{L}{G}>\frac{I}{G}\cdot \frac{G}{L}=\frac{I}{L},$$
or 
\begin{equation}\label{ineq6}
L>\sqrt{IG}.
\end{equation}
The inequality \eqref{ineq6} was proved by Alzer \cite{alzer2}. For the convenience of the reader, we write that inequality \eqref{ineq5}
implies the inequality \eqref{ineq6} as follows:

\begin{equation}
\begin{cases}
\displaystyle e^{(x/\tanh(x)-1)/2}<\frac{\sinh(x)}{x}, & x>0,\\
L>\sqrt{IG}.
\end{cases}
\end{equation}

The  Adamovi\'c-Mitrinovic inequality and Cusa- Huygens
inequality \cite{mit} imply the double double inequality for Seiffert mean 
$P$ as follows:
\begin{equation}
\begin{cases}
\displaystyle \cos(x)^{1/3}<\frac{\sin(x)}{x}<\frac{2+\cos(x)}{3}, & x\in(0,\pi/2),\\
\sqrt[3]{A^2G}<P<\displaystyle\frac{2A+G}{3}.
\end{cases}
\end{equation}

The following trigonometric inequalities 
(see \cite[Theorem 1.5]{barsan3}) imply an other double inequality for Seiffert mean $P$,

\begin{equation}\label{0209f}
\begin{cases}
\displaystyle\exp\left(\frac{1}{2}\left(\frac{x}{\tan x}-1\right)\right)
< \displaystyle\frac{\sin x}{x}< \displaystyle
\exp\left(\left(\log\frac{\pi}{2}\right)\left(\frac{x}{\tan x}-1\right)\right)& x\in(0,\pi/2),\\
\sqrt{AX}<P<A\left(\frac{X}{A}\right)^{\log(\pi/2)}.
\end{cases}
\end{equation}
The second mean inequality in \eqref{0209f} was also pointed out by S\'andor 
(see \cite[Theorem 2.12]{sandornew}).

By observing that $A=G^2/H$, we conclude that the hyperbolic version of Adamovi\'c-Mitrinovic and Cusa-Huygens inequalities (see \cite{neusan0509}) imply the inequalities of Leach and Sholander (see \cite{sandore,sandor1405}),
\begin{equation}\label{028a}
\begin{cases}
\displaystyle \cosh(x)^{1/3}<\frac{\sinh(x)}{x}<\frac{2+\cosh(x)}{3}, & x>0,\\
\sqrt[3]{AG^2}<L<\displaystyle\frac{2A+G}{3}.
\end{cases}
\end{equation}

%The mean inequality in \eqref{028a} can also be written as
%$$\left(\frac{1+G/H}{2}\right)^{2/3}<\frac{L}{G}<
%\left(\frac{1+G/H}{2}\right),$$
%replacing $G/A$ by $H/G$.
%
%
%\begin{equation}\label{028a}
%\begin{cases}
%\left(\frac{1+\cos(x)}{2}\right)^{2/3}<\frac{\sin(x)}{x}<
%\left(\frac{1+\cos(x)}{2}\right)^{1/c_1}, \& c_1=\log2/(\log(\pi/2)),\quad x\in(0,\frac{\pi}{2}),\\
%\left(\frac{1+H/G}{2}\right)^{2/3}<\frac{P}{A}<
%\left(\frac{1+H/G}{2}\right)^{c_1}.$$
%\end{cases}
%\end{equation}

\end{remark}

%%%%%%%%%%%%%%%%%%%%%%%%%%%%%%%%%%%
%%%%%%%%% Lemmas %%%%%%%%%
%%%%%%%%%%%%%%%%%%%%%%%%%%%%%%%%
\section{Preliminaries and lemmas}
The following result by Biernacki and Krzy\.z \cite{bier} will
be used in studying the monotonicity of certain power series.

\begin{lemma}\label{lembk}
For $0<R\leq \infty$. Let $A(x)=\sum_{n=0}^\infty a_nx^n$ and 
$C(x)=\sum_{n=0}^\infty c_nx^n$ be two real power series converging on the interval $(-R,R)$. If the sequence
$\{a_n/c_n\}$ is increasing (decreasing) and $c_n>0$ for all $n$, then the function $A(x)/C(x)$ is also
increasing (decreasing) on $(0,R)$.
\end{lemma}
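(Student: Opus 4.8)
The plan is to show that the derivative of $f(x) = A(x)/C(x)$ keeps a constant sign on $(0,R)$, this sign being dictated by the monotonicity of the ratio sequence $\{a_n/c_n\}$. Since both series converge on $(-R,R)$, they are differentiable there, with derivatives obtained by term-by-term differentiation. Because $c_n > 0$ for every $n$, for $x \in (0,R)$ we have $C(x) = \sum_{n\geq 0} c_n x^n > 0$ (indeed already $c_0 > 0$), so $C(x)^2 > 0$ and the sign of
\[
f'(x) = \frac{A'(x)C(x) - A(x)C'(x)}{C(x)^2}
\]
coincides with the sign of the numerator $D(x) := A'(x)C(x) - A(x)C'(x)$. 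Hence it suffices to control the sign of $D$ on $(0,R)$.

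Next I would expand $D$ as a power series by Cauchy multiplication. Writing $A'(x) = \sum_{k\geq 1} k a_k x^{k-1}$ and $C'(x) = \sum_{k\geq 1} k c_k x^{k-1}$ and collecting the coefficient of $x^n$, one obtains
\[
D(x) = \sum_{n\geq 0} d_n x^n, \qquad d_n = \sum_{k=0}^{n+1}(2k - n - 1)\, a_k\, c_{n+1-k},
\]
where the endpoints of the sum may be extended freely since the extreme terms vanish. As $x > 0$, it is then enough to prove that every $d_n$ is nonnegative (for the increasing case), for then $D(x) \geq 0$ and $f$ is increasing.

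The crux, and the step I expect to demand the most care, is extracting the sign of $d_n$ from the hypothesis that $b_k := a_k/c_k$ is increasing. The idea is to pair the summation index $k$ with its reflection $n+1-k$. Setting $m = n+1$ and substituting $a_k = b_k c_k$, the combined contribution of the pair consisting of $k$ and $m-k$ (for $k < m/2$) is
\[
(2k-m)\,c_k c_{m-k}\,(b_k - b_{m-k}),
\]
while the central index $k = m/2$, present only when $m$ is even, contributes zero. For $k < m/2$ we have $2k - m < 0$ and, by monotonicity, $b_k - b_{m-k} \leq 0$; the product of these two nonpositive factors is nonnegative, and $c_k c_{m-k} > 0$. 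Summing over all such pairs gives $d_n \geq 0$, whence $f' \geq 0$ and $f$ is increasing on $(0,R)$. The decreasing case is identical after reversing $b_k \leq b_{m-k}$, which flips the sign of every paired term and yields $f' \leq 0$. The only genuine subtlety is the bookkeeping: justifying the extension of the index range so that the coefficient takes the symmetric form $(2k-m)\,a_k c_{m-k}$, and treating the parity of $m$ so that the reflection pairing is exhaustive.
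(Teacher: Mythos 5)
Your argument is correct and is precisely the classical Biernacki--Krzy\.z proof: reduce to the sign of $A'C-AC'$, compute its Cauchy-product coefficients $d_n=\sum_{k=0}^{n+1}(2k-n-1)a_kc_{n+1-k}$, and show each $d_n\ge 0$ by pairing $k$ with $n+1-k$ and using $c_kc_{n+1-k}>0$ together with the monotonicity of $a_k/c_k$. The paper itself offers no proof of this lemma (it is quoted from the reference of Biernacki and Krzy\.z), so there is nothing to diverge from; your write-up fills that gap with the standard argument, and the bookkeeping you flag (extending the index range and handling the central index for even $n+1$) works exactly as you describe.
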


For $|x|<\pi$, the following power series expansions can be found in \cite[1.3.1.4 (2)--(3)]{jef},
\begin{equation}\label{xcot}
x \cot x=1-\sum_{n=1}^\infty\frac{2^{2n}}{(2n)!}|B_{2n}|x^{2n},
\end{equation}

\begin{equation}\label{cot}
\cot x=\frac{1}{x}-\sum_{n=1}^\infty\frac{2^{2n}}{(2n)!}|B_{2n}|x^{2n-1},
\end{equation}
and 
\begin{equation}\label{coth}
{\rm \coth x}=\frac{1}{x}+\sum_{n=1}^\infty\frac{2^{2n}}{(2n)!}|B_{2n}|x^{2n-1},
\end{equation}
where $B_{2n}$ are the even-indexed Bernoulli numbers 
(see \cite[p. 231]{IR}). 
We can get the following expansions directly from (\ref{cot}) and (\ref{coth}),

\begin{equation}\label{cosec}
\frac{1}{(\sin x)^2}=-(\cot x)'=\frac{1}{x^2}+\sum_{n=1}^\infty\frac{2^{2n}}{(2n)!}
|B_{2n}|(2n-1)x^{2n-2},
\end{equation}

\begin{equation}\label{cosech}
\frac{1}{(\sinh x)^2}=-({\rm coth} x)'=\frac{1}{x^2}-\sum_{n=1}^\infty\frac{2^{2n}}{(2n)!}(2n-1)|B_{2n}|x^{2n-2}.
\end{equation}
For the following expansion formula 
\begin{equation}\label{xsin}
\frac{x}{\sin x}=1+\sum_{n=1}^\infty\frac{2^{2n}-2}{(2n)!}|B_{2n}|x^{2n}
\end{equation}
see \cite{li}.

\begin{lemma}\cite[Theorem 2]{avv1}\label{lem0}
For $-\infty<a<b<\infty$,
let $f,g:[a,b]\to \mathbb{R}$
be continuous on $[a,b]$, and differentiable on
$(a,b)$. Let $g^{'}(x)\neq 0$
on $(a,b)$. If $f^{'}(x)/g^{'}(x)$ is increasing
(decreasing) on $(a,b)$, then so are
$$\frac{f(x)-f(a)}{g(x)-g(a)}\quad and \quad \frac{f(x)-f(b)}{g(x)-g(b)}.$$
If $f^{'}(x)/g^{'}(x)$ is strictly monotone,
then the monotonicity in the conclusion
is also strict.
\end{lemma}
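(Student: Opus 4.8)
The plan is to reduce the statement to a sign computation for the derivative of each quotient and then to control that sign by comparing the difference quotient with the pointwise ratio $f'/g'$ via Cauchy's Mean Value Theorem. Before anything else I would record two preliminary facts. Since $g'$ is a derivative it has the intermediate value (Darboux) property, so the hypothesis $g'(x)\neq 0$ on $(a,b)$ forces $g'$ to keep a constant sign there; replacing the pair $(f,g)$ by $(-f,-g)$ if necessary --- a substitution that alters neither $f'/g'$ nor either of the two quotients in the conclusion --- I may assume $g'>0$ throughout. Then $g$ is strictly increasing, so the endpoint denominators $g(x)-g(a)$ and $g(x)-g(b)$ never vanish on the relevant subintervals and have definite signs.

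Next, set $F(x)=\frac{f(x)-f(a)}{g(x)-g(a)}$ on $(a,b]$. Differentiating and clearing the positive factor $\bigl(g(x)-g(a)\bigr)^2$ reduces the sign of $F'(x)$ to the sign of
$$N(x)=f'(x)\bigl(g(x)-g(a)\bigr)-\bigl(f(x)-f(a)\bigr)g'(x).$$
To show $N(x)\geq 0$ I would apply Cauchy's Mean Value Theorem on $[a,x]$ to produce $\xi\in(a,x)$ with $\frac{f(x)-f(a)}{g(x)-g(a)}=\frac{f'(\xi)}{g'(\xi)}$; since $f'/g'$ is increasing and $\xi<x$, this gives $\frac{f(x)-f(a)}{g(x)-g(a)}\leq\frac{f'(x)}{g'(x)}$, and multiplying through by the positive quantities $g(x)-g(a)$ and $g'(x)$ yields exactly $N(x)\geq 0$. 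Hence $F$ is increasing. The companion quotient $\frac{f(x)-f(b)}{g(x)-g(b)}$ is handled symmetrically: Cauchy's theorem on $[x,b]$ produces $\xi\in(x,b)$ for which the difference quotient equals $f'(\xi)/g'(\xi)\geq f'(x)/g'(x)$, and the same clearing of signs gives the analogous nonnegativity.

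The decreasing case follows by applying the increasing case to $-f$, since then $(-f)'/g'$ is increasing; and the strict statement follows by noting that when $f'/g'$ is strictly monotone the comparison $f'(\xi)/g'(\xi)<f'(x)/g'(x)$ is strict, forcing $N(x)>0$. The main obstacle I anticipate is not any single computation but the careful bookkeeping of signs: the argument must stay valid whether $g'$ is positive or negative, and it is precisely the reduction to $g'>0$ (via Darboux's property together with the substitution $(f,g)\mapsto(-f,-g)$) that prevents the case analysis from proliferating. I would also be careful to confirm at the outset that the denominators never vanish, which is immediate once $g$ is known to be strictly monotone.
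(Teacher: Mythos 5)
Your proof is correct. Note that the paper itself supplies no argument for this lemma --- it is imported verbatim as \cite[Theorem 2]{avv1} --- so there is no internal proof to compare against; your Cauchy mean value theorem argument, with the Darboux-property reduction to $g'>0$ via the substitution $(f,g)\mapsto(-f,-g)$ and the sign analysis of $N(x)=f'(x)\bigl(g(x)-g(a)\bigr)-\bigl(f(x)-f(a)\bigr)g'(x)$, is the standard proof of this monotone form of l'H\^opital's rule and all the steps (including the companion quotient at $b$, the decreasing case via $-f$, and the strict case) check out.
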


\begin{lemma}\label{lemma5} The following function
$$h(x)=\frac{\log(x/\sin(x))}{\log(e^{1-x/\tan(x)}\sin(x)/x)}$$
is strictly decreasing from $(0,\pi/2)$ onto $(\beta_2,1)$, where
$\beta_2=\log(\pi/2)/\log(2e/\pi)\approx 0.8234.$ In particular, 
for $x\in(0,\pi/2)$ we have
$$\left(\frac{e^{x/\tan(x)-1}\sin(x)}{x}\right)^{\beta_2}<\frac{x}{\sin(x)}<\left(\frac{e^{x/\tan(x)-1}\sin(x)}{x}\right).$$
\end{lemma}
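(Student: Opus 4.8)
The plan is to write $h$ as a quotient of two functions that both vanish at the origin and then apply the monotone form of l'Hôpital's rule (Lemma \ref{lem0}) iteratively. First I would simplify the denominator. Since
$$\log\!\left(e^{1-x/\tan x}\,\frac{\sin x}{x}\right)=(1-x\cot x)-\log\!\left(\frac{x}{\sin x}\right),$$
I set $f(x)=\log(x/\sin x)$ and $g(x)=1-x\cot x-\log(x/\sin x)$, so that $h=f/g$ with $f(0^+)=g(0^+)=0$ (both behave like $x^2/6$). Differentiating gives $f'(x)=1/x-\cot x$ and $g'(x)=x/\sin^2 x-1/x=(x^2-\sin^2 x)/(x\sin^2 x)$, which is positive on $(0,\pi/2)$, and
$$\frac{f'(x)}{g'(x)}=\frac{\sin x\,(\sin x-x\cos x)}{x^2-\sin^2 x}=:\frac{N(x)}{D(x)}.$$

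The crux is to show $N/D$ is strictly decreasing. A direct attack through the Biernacki--Krzy\.z lemma (Lemma \ref{lembk}) is blocked here, because the Maclaurin coefficients of $N$ and $D$ alternate in sign, so the hypothesis $c_n>0$ fails (and in fact the naive coefficient ratio points the wrong way). Instead I would apply Lemma \ref{lem0} two more times. One checks $N(0)=N'(0)=D(0)=D'(0)=0$, with $N'(x)=\frac{1}{2}\sin 2x-x\cos 2x$, $D'(x)=2x-\sin 2x$, $N''(x)=2x\sin 2x$, and $D''(x)=2-2\cos 2x=4\sin^2 x$; the double differentiation then collapses to
$$\frac{N''(x)}{D''(x)}=\frac{2x\sin 2x}{4\sin^2 x}=x\cot x,$$
which is strictly decreasing on $(0,\pi/2)$ directly from its Bernoulli expansion \eqref{xcot}, since every subtracted coefficient is positive. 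Because $D''>0$ and $D'(x)=2x-\sin 2x>0$ on $(0,\pi/2)$, Lemma \ref{lem0} propagates the monotonicity up the chain: $N''/D''$ decreasing $\Rightarrow N'/D'$ decreasing $\Rightarrow N/D=f'/g'$ decreasing; and since $g'>0$, one final application yields that $h=f/g$ is strictly decreasing.

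Finally I would pin down the range by computing the endpoints. As $x\to 0^+$ the l'Hôpital chain gives $h\to\lim x\cot x=1$, while at the other end $f(\pi/2)=\log(\pi/2)$ and $g(\pi/2)=1-\log(\pi/2)=\log(2e/\pi)$, so $h(\pi/2^-)=\log(\pi/2)/\log(2e/\pi)=\beta_2$. By continuity and strict monotonicity, $h$ maps $(0,\pi/2)$ onto $(\beta_2,1)$. The stated two-sided inequality then follows by exponentiating $\beta_2<h(x)<1$, using that the denominator $\log(e^{1-x/\tan x}\sin x/x)=\log(P/X)$ is positive on $(0,\pi/2)$ (equivalently $X<P$, from Theorem \ref{sandortheorem}).

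I expect the main obstacle to be recognizing that the power-series route via Lemma \ref{lembk} is unavailable owing to the alternating signs, and that the correct device is the iterated monotone l'Hôpital rule; the decisive computation is the exact simplification of the second derivatives to $x\cot x$, after which the monotonicity is immediate from \eqref{xcot}.
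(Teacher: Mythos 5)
Your proof is correct, but at the decisive step it takes a genuinely different route from the paper. Both arguments reduce to showing that $f'/g'$ is decreasing and then invoke Lemma \ref{lem0} once; the paper does this by normalizing the quotient as $\frac{1-x\cot x}{x^2/\sin^2x-1}$ (i.e.\ dividing your $N$ and $D$ by $\sin^2 x$) and applying the Biernacki--Krzy\.z lemma: via \eqref{xcot} and \eqref{cosec} both numerator and denominator then have \emph{positive} Maclaurin coefficients, with ratio $1/(2n-1)$ decreasing. So your claim that the power-series route is ``blocked'' by alternating signs holds only for your particular representation $N/D=\frac{\sin x(\sin x-x\cos x)}{x^2-\sin^2x}$; the paper's normalization removes that obstruction. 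Your alternative --- iterating the monotone l'H\^opital rule twice more after checking $N(0)=N'(0)=D(0)=D'(0)=0$, and collapsing $N''/D''$ to $x\cot x$ --- is a clean and slightly more elementary device: it needs only the monotonicity of $x\cot x$ (immediate from \eqref{xcot}, or from $(x\cot x)'=(\tfrac12\sin 2x-x)/\sin^2 x<0$) rather than the expansion of $1/\sin^2x$, and your endpoint computation is more explicit than the paper's bare appeal to l'H\^opital. One small caution on the final step: exponentiating $\beta_2<h(x)<1$ against the positive denominator $\log(P/X)$ yields $\bigl(e^{1-x/\tan x}\sin x/x\bigr)^{\beta_2}<x/\sin x<e^{1-x/\tan x}\sin x/x$, i.e.\ powers of $P/X$, which is what your derivation actually produces and what Theorem \ref{thm30} requires; the displayed ``in particular'' inequality in the lemma has the sign of the exponent reversed, so you should state the corrected form rather than the printed one.
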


\begin{proof} Let 
$$h(x)=\frac{h_1(x)}{h_2(x)}=\frac{\log(x/\sin(x))}{\log(e^{1-x/\tan(x)\sin(x)/x})},$$
for $x\in(0,\pi/2)$. Differentiating with respect to $x$, we get
$$\frac{h'_1(x)}{h'_2(x)}=\frac{1-x/\tan(x)}{(x/\sin(x))^2-1}=
\frac{A_1(x)}{B_1(x)}.$$
Using the expansion formula 
we have
$$A_1(x)=\sum_{n=1}^\infty\frac{2^{2n}2n}{(2n)!}|B_{2n}|x^{2n}=\sum_{n=1}^\infty a_nx^{2n}$$
and 
$$B_1(x)=\sum_{n=1}^\infty\frac{2^{2n}2n}{(2n)!}|B_{2n}|(2n-1)x^{2n}=\sum_{n=1}^\infty b_nx^{2n}.$$
Let $c_n=a_n/b_n=1/(2n-1)$, which is the decreasing in $n\in\mathbb{N}$. Thus, by Lemma \ref{lembk} $h'_1(x)/h'_2(x)$
is strictly decreasing in $x\in(0,\pi/2)$. In turn, this implies by Lemma 
\ref{lem0} that $h(x)$ is strictly decreasing in $x\in(0,\pi/2)$. Applying l'H\^opital rule, we get 
$\lim_{x\to 0}h(x)=1$ 
and $\lim_{x\to \pi/2}h(x)=\beta_2$. This completes the proof.
\end{proof}

%\begin{lemma}\label{lemma1}\cite[Corollary]{barsan} For $a>b>0$, 
%\begin{equation}\label{ineq1}
%\frac{G}{A} = \cos(x),\, \frac{H}{A}= \cos(x)^2,\   \frac{P}{A}= \frac{\sin (x)}{x},\
%\frac{X}{A}= e^{x {\rm cot}(x)-1},  
%\end{equation}
%\begin{equation}\label{ineq2} 
%\frac{L}{G}= \frac{\sinh (y)}{y},\, \frac{L}{A}= \frac{\tanh (y)}{y},\
 %\frac{H}{G}= \frac{1}{\cosh (y)},\,  
%\frac{Y}{G}= e^{\tanh (y)/y -1}. 
%\end{equation}
%
%\begin{equation}\label{ineq3} 
%\log\left(\frac{I}{G}\right)=\frac{A}{L}-1,\quad 
%\log\left(\frac{Y}{G}\right)=\frac{L}{A}-1
%\end{equation}
%where $G=G(a,b)$, $L=L(a,b)$ and $P=P(a,b)$.
%\end{lemma}

\begin{lemma}\label{lemma2} The following function
$$f(x)=\frac{1-e^{x/\tan(x)-1}}{1-\cos(x)}$$
is strictly decreasing from $(0,\pi/2)$ onto 
$((e-1)/e,2/3)$where $(e-1)/e\approx 0.6321$.
In particular, for $x\in(0,\pi/2)$, we have
$$\frac{1}{\log(1+(e-1)\cos(x))}<\frac{\tan(x)}{x}<
\frac{1}{1+\log((1+2\cos(x))/3)}.$$
\end{lemma}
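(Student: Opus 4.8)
The plan is to prove monotonicity of $f$ via the standard monotone-ratio technique that the paper has set up, then read off the limits and translate the result into the stated double inequality. First I would rewrite $f(x) = f_1(x)/f_2(x)$ with $f_1(x) = 1 - e^{x/\tan(x)-1}$ and $f_2(x) = 1 - \cos(x)$, noting that both numerator and denominator vanish as $x\to 0^+$, so Lemma \ref{lem0} (the monotone l'H\^opital rule of Anderson--Vamanamurthy--Vuorinen) is the natural tool. Differentiating gives
$$\frac{f_1'(x)}{f_2'(x)} = \frac{e^{x/\tan(x)-1}\,(d/dx)(1 - x/\tan(x))}{\sin(x)},$$
and since $(d/dx)(x/\tan(x)) = 1/\tan(x) - x/(\sin(x))^2$, the factor $(d/dx)(1 - x/\tan(x))$ simplifies to $x/(\sin(x))^2 - 1/\tan(x) = (x - \sin(x)\cos(x))/(\sin(x))^2$. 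Thus I would obtain an explicit expression for $f_1'/f_2'$ as $e^{x/\tan(x)-1}$ times an elementary trigonometric ratio.

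The core task is then to show this ratio is monotone (I expect decreasing) on $(0,\pi/2)$. Here the exponential factor $e^{x/\tan(x)-1}$ is itself decreasing, since $x/\tan(x)$ decreases on $(0,\pi/2)$ (its derivative, computed above, is negative because $x > \sin(x)\cos(x) = \tfrac12\sin(2x)$). So if I can show the remaining trigonometric factor $(x - \sin(x)\cos(x))/(\sin^3(x))$ is also decreasing, then the product of two positive decreasing functions is decreasing and Lemma \ref{lem0} delivers that $f$ is strictly decreasing. To handle the trigonometric factor I would invoke the Biernacki--Krzy\.z criterion (Lemma \ref{lembk}) together with the power-series expansions (\ref{cosec}) and (\ref{xsin}) already recorded in the paper, writing both $x - \sin(x)\cos(x)$ and $\sin^3(x)$ (or an equivalent reformulation) as power series in $x$ and checking that the quotient of coefficient sequences is monotone.

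Once monotonicity of $f$ is established, the endpoint values follow by l'H\^opital's rule: as $x\to 0^+$ a double application gives the upper limit $2/3$, and at $x\to (\pi/2)^-$ one has $x/\tan(x)\to 0$ and $\cos(x)\to 0$, so $f\to (1 - e^{-1})/1 = (e-1)/e$, confirming the range $((e-1)/e,\,2/3)$. Finally, to extract the displayed inequality I would use that $f(x)$ lies strictly between its two limits for each $x\in(0,\pi/2)$; the chain
$$\frac{e-1}{e} < \frac{1 - e^{x/\tan(x)-1}}{1-\cos(x)} < \frac23$$
rearranges, upon solving for $x/\tan(x)$ inside the exponential and taking logarithms, into
$$1 + \log\!\Big(\frac{1+2\cos(x)}{3}\Big) < \frac{x}{\tan(x)} < \log\big(1 + (e-1)\cos(x)\big),$$
which inverts to the claimed bounds on $\tan(x)/x$.

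The main obstacle I anticipate is the monotonicity of the purely trigonometric factor $(x - \sin(x)\cos(x))/\sin^3(x)$: its series manipulation is the one genuinely non-routine computation, since I must combine the expansions for $1/\sin^2(x)$ and $x/\sin(x)$ correctly, identify the coefficient sequences, and verify the ratio is monotone so that Lemma \ref{lembk} applies. Everything downstream of that monotonicity claim is bookkeeping (limits and algebraic inversion of the inequality), so the weight of the proof rests on that single power-series estimate.
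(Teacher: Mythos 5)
Your setup, endpoint limits, and the algebraic rearrangement into the displayed double inequality are all correct, and you correctly compute
$$\frac{f_1'(x)}{f_2'(x)}=e^{x/\tan(x)-1}\cdot\frac{x-\sin(x)\cos(x)}{\sin(x)^3}.$$
But the step you yourself identify as carrying the weight of the proof rests on a false claim: the factor $g(x)=(x-\sin(x)\cos(x))/\sin(x)^3$ is \emph{not} decreasing on $(0,\pi/2)$. It increases from $2/3$ at $0^+$ (expand: $x-\tfrac12\sin(2x)=\tfrac23x^3-\cdots$, $\sin^3(x)=x^3-\cdots$) to $\pi/2\approx 1.571$ at $(\pi/2)^-$, passing through roughly $0.807$ at $x=\pi/4$. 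So no amount of Biernacki--Krzy\.z bookkeeping will establish what you want, and the ``product of two positive decreasing functions'' conclusion is unavailable. Worse, knowing the true monotonicities does not rescue the plan: the exponential factor decreases while $g$ increases, and a product of a positive decreasing function with a positive increasing one need not be monotone, so the argument genuinely breaks rather than merely needing a sign flipped.

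The paper avoids this by not factoring at all: it differentiates $f_3=f_1'/f_2'$ as a whole and shows
$$f_3'(x)=-\frac{e^{x/\tan(x)-1}}{\sin(x)^3}\bigl(c(x)-2\bigr),\qquad c(x)=x\left(\frac{\cos(x)}{\sin(x)}+\frac{x}{\sin(x)^2}\right),$$
so everything reduces to the single inequality $c(x)>2$, which follows from the Cusa--Huygens inequality $\sin(x)/x<(2+\cos(x))/3$ (or, even more directly, from the series \eqref{xcot} and \eqref{cosec}, which give $c(x)=2+\sum_{n\ge 2}\frac{2^{2n}(2n-2)}{(2n)!}|B_{2n}|x^{2n}$). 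If you want to salvage your route, that is the computation to do: treat $f_3$ as one function, differentiate, and isolate $c(x)-2$. Everything downstream in your write-up (the limits $2/3$ and $(e-1)/e$, and the inversion into the bounds on $\tan(x)/x$) can then stand as written.
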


\begin{proof} Write $f(x)=f_1(x)/f_2(x)$, where 
$f_1(x)=1-e^{x/\tan(x)-1}$ and $f_2(x)=1-\cos(x)$ for all $x\in(0\pi/2)$. Clearly,
$f_1(x)=0=f_2(x)$. Differentiating with respect to $x$, we get
$$\frac{f'_1(x)}{f'_2(x)}=\frac{e^{x/\tan(x)-1}}{\sin(x)^3}\left(\frac{x}{\sin(x)^2}-\frac{\cos(x)}{\sin(x)}\right)
=f_3(x).$$ Again
$$f'_3(x)=-\frac{e^{x/\tan(x)-1}}{\sin(x)^3}\left(c(x)-2\right),$$ 
where
$$c(x)=x
\left(\frac{\cos(x)}{\sin(x)}+\frac{x}{\sin(x)^2}\right).$$
In order to show that $f'_3<0$, it is enough to prove that 
$$c(x)>2,$$
which is equivalent to 
$$\frac{\sin(x)}{x}<\frac{x+\sin(x)\cos(x)}{2\sin(x)}.$$ Applying the Cusa-Huygens inequality
$$\frac{\sin(x)}{x}<\frac{\cos(x)+2}{3},$$ we get
$$\frac{\cos(x)+2}{3}<\frac{x+\sin(x)\cos(x)}{2\sin(x)},$$
which is equivalent to $(\cos(x)-1)^2>0$. Thus $f'_3 >0$, clearly $f'_1/f'_2$ is strictly decreasing in $x\in(0,\pi/2)$. By Lemma \ref{lem0}, we conclude that the function $f(x)$ is strictly decreasing in 
$x\in(0,\pi/2)$. The limiting values follows easily. This completes the proof of the lemma.  
\end{proof}

\begin{lemma}\label{lemma3} The following function
$$f_4(x)=
\frac{\sin (x)}{x \left(\cos (x)-e^{x \cot (x)-1}+1\right)}$$
is strictly increasing from $(0,\pi/2)$ onto $(1,c)$, where
$c=2e/(\pi(e-1))\approx 1.0071$. In particular, for $x\in(0,\pi/2)$ we have
$$1+\cos(x)-e^{x/\tan(x)-1}<\frac{\sin(x)}{x}<c(1+\cos(x)-e^{x/\tan(x)-1}).$$
\end{lemma}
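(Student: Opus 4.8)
The plan is to prove that $f_4$ is strictly increasing on $(0,\pi/2)$ and then to read off the endpoint values; the ``in particular'' inequalities are then immediate. Before anything else I note that the denominator is positive: by \eqref{xcot} we have $x\cot x<1$, hence $e^{x\cot x-1}<1<1+\cos x$, so $\cos x-e^{x\cot x-1}+1>0$ on $(0,\pi/2)$ and $f_4>0$ there. For the boundary values, $x\cot x\to1$ as $x\to0^+$ gives $e^{x\cot x-1}\to1$, so $\cos x-e^{x\cot x-1}+1\to1$ while $\sin x/x\to1$, whence $f_4\to1$; at the other end $\cos(\pi/2)=0$ and $e^{(\pi/2)\cot(\pi/2)-1}=e^{-1}$, so $f_4\to \frac{1}{(\pi/2)(1-1/e)}=\frac{2e}{\pi(e-1)}=c$. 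Once monotonicity is known, continuity forces the range to be exactly $(1,c)$, and clearing the positive denominator in $1<f_4<c$ yields the stated two-sided estimate for $\sin x/x$.

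For monotonicity I would compute the logarithmic derivative. Writing $E=e^{x\cot x-1}$ and using $E'=E(\cot x-x/\sin^2 x)$, differentiate $\log f_4=\log\sin x-\log x-\log(1+\cos x-E)$ and multiply the sign condition $f_4'>0$ by the positive factors $\sin x$ and $1+\cos x-E$. The two terms in $E\cot x$ cancel, and after the simplification $\cot x(1+\cos x)+\sin x=(1+\cos x)/\sin x$ and division by $x-\sin x>0$, the condition $f_4'>0$ collapses to the single inequality
\begin{equation*}
(1+\cos x)\sin x>e^{x\cot x-1}\,(x+\sin x),\qquad x\in(0,\pi/2).
\end{equation*}

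This last inequality is the main obstacle, since near $x=0$ its two sides agree to high order (their ratio tends to $1$), so no crude bound will do. I would take logarithms and set $\psi(x)=\log\left((1+\cos x)\sin x\right)-\log(x+\sin x)-x\cot x+1$, for which $\psi(0^+)=0$. Differentiating and putting everything over the positive denominator $\sin^2 x\,(x+\sin x)$ reduces $\psi'>0$ to the positivity of $\Phi(x)=x^2-2\sin^2 x+x\sin x\cos x$. Here the substitution $t=2x\in(0,\pi)$ is the key move: with $2\sin^2 x=1-\cos t$ and $x\sin x\cos x=\frac{t}{4}\sin t$ it turns $\Phi$ into $\Psi(t)=\frac{t^2}{4}-1+\cos t+\frac{t}{4}\sin t$, and one checks $\Psi(0)=\Psi'(0)=\Psi''(0)=0$ together with $\Psi'''(t)=\frac14(\sin t-t\cos t)$. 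Since $(\sin t-t\cos t)'=t\sin t>0$ on $(0,\pi)$ and $\sin t-t\cos t$ vanishes at $0$, we get $\Psi'''>0$, and integrating three times gives $\Psi>0$, i.e.\ $\Phi>0$. Hence $\psi$ is increasing from $\psi(0^+)=0$, so $\psi>0$, which is the starred inequality; therefore $f_4'>0$ and $f_4$ is strictly increasing, completing the proof.
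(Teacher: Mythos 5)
Your proof is correct and follows essentially the same route as the paper: both arguments reduce $f_4'>0$ to the single inequality $(1+\cos x)\sin x>e^{x\cot x-1}(x+\sin x)$, handle it by taking logarithms of the two sides (your $\psi$ is the negative of the paper's auxiliary function $f_5$), and reduce the resulting monotonicity claim to the inequality $x\bigl(\cot x+x\csc^2 x\bigr)>2$, i.e.\ to the positivity of your $\Phi(x)=x^2-2\sin^2x+x\sin x\cos x$. The only divergence is in this last step: the paper imports it from the proof of its Lemma \ref{lemma2}, where it is deduced from the Cusa--Huygens inequality $\sin x/x<(2+\cos x)/3$ via an algebraic reduction to $(1-\cos x)^2>0$, whereas you prove it self-containedly through the double-angle substitution $t=2x$ and three differentiations of $\Psi(t)=\tfrac{t^2}{4}-1+\cos t+\tfrac{t}{4}\sin t$. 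Your version is slightly longer but independent of Cusa--Huygens; the paper's is shorter given that the inequality $c(x)>2$ has already been established earlier.
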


\begin{proof} Differentiating with respect to $x$ we get
$$f'_4(x)=\frac{e (x-\sin (x)) \left(e \cos (x)-(x+\sin (x)) e^{x \cot (x)} \csc
   (x)+e\right)}{x^2 \left(e \cos (x)-e^{x \cot (x)}+e\right)^2}.$$
	Let
	
	$$f_5(x)=\log \left((x+\sin (x)) e^{x \cot (x)} \csc (x)\right)-\log (e \cos (x)+e),$$
	We get
	$$f'_4(x)=\frac{2-x \left(\cot (x)+x \csc ^2(x)\right)}{x+\sin (x)},$$
	which is negative by the proof of Lemma \ref{lemma2}, and $\lim_{x\to 0}f_5(x)=0$. This implies that $f'_4(x)>0$, and $f_4(x)$
	is strictly increasing. The limiting values follows easily. This implies the proof.
\end{proof}
%%%%%%%%%%%%%%%%%%%%%%%%%%%%%%%%%%%%%%%%%%%%
%%%%%%%%%%%%%%%%%%%%%%%%%%%%%%%%%%%%%%%%%%%%
\section{Proofs}

\noindent{\bf Proof of Theorem \ref{thm1}.} It follows from Lemma \ref{lemma2} that
$$\frac{e-1}{e}<\frac{1-1/e^{1-x/\tan(x)}}{\cos(x)/e^{1-x/\tan(x)}-1/e^{1-x/\tan(x)}}<\frac{2}{3}.$$
Now we get the proof of \eqref{30ineqa} by utilizing the Lemma \ref{lemma1}.
The proof of \eqref{30ineqb}
follows easily from Lemmas \ref{lemma1} and \ref{lemma2}.
$\hfill\square$

%\begin{theorem}\label{thm2} For $a,b>0$, we have
%$$A+G-\alpha_1 P<X<A+G-\beta_1 P,$$
%with best possible constants $\alpha_1=1$ and $\beta_1 =\pi(e-1)/(2e)\approx 0.9929$.
%\end{theorem}
%
%
%\begin{proof} Proof follows easily from Lemmas \ref{lemma1} and \ref{lemma2}.
%\end{proof}

\bigskip

\noindent{\bf Proof of Theorem \ref{0208thm}.} For the proof of the first inequality see \cite[Theorem 7(2)]{barsan}.
For the validity of the following inequality
$$\frac{\sinh(x)-\cosh(x)}{2x\cosh(x)}<\log\left(\frac{1}{\cosh(x)}\right)$$
see \cite{barsan3}, which is equivalent to 
\begin{equation}\label{ineq0208b}
\sqrt{\cosh(x)}\cdot\exp{\tanh(x)/x-1}<1.
\end{equation}
By Lemma \ref{lemma1} the inequality \eqref{ineq0208b}
implies the proof of the second inequality.

$\hfill\square$

\noindent{\bf Proof of Theorem \ref{thm3}.} Let $g(x)=g_1(x)/g_2(x)$, where
$$g_1(x)=\log\left(\frac{2+1/\cos(x)}{2+\cos(x)}\right),\quad
g_2(x)=\log\left(\frac{1}{\cos(x)}\right),$$
for all $x\in(0,\pi/2)$. Differentiating with respect to $x$ we get
$$\frac{g'_1(x)}{g'_2(x)}=1-\frac{1}{5+2\cos(x)+2/\cos(x)}=g_3(x).$$
The function $g_3(x)$ is strictly increasing in $x\in(0,\pi/2)$, because
$$g'_3(x)=\frac{6\sin(x)^3}{(3+5\cos(x)+\cos(x)^2-\sin(x)^2)^2}
>0.$$ Hence $g'_1(x)/g'_2(x)$ is strictly increasing, and clearly 
$g_1(0)=0=g_2(0)$. Since the function $g(x)$ is stricty increasing by Lemma \ref{lem0}, and we get
$$\lim_{x\to 0} g(x)=\frac{2}{3}<g(x)<1=\lim_{x\to \pi/2}  g(x).$$ 
This implies the proof of \eqref{today}.

Next we consider the proof of \ref{ineq0208c}. By Lemma \ref{lemma1} the following inequality
$$\left(\frac{1+\cos(x)}{2}\right)^{2/3}<\frac{\sin(x)}{x}<
\left(\frac{1+\cos(x)}{2}\right)^{1/c_1}, \quad c_1=\log2/(\log(\pi/2)),\quad x\in(0,\frac{\pi}{2}),$$
implies
\begin{equation}\label{ineq0208d}
\left(\frac{1+H/G}{2}\right)^{2/3}<\frac{P}{A}<
\left(\frac{1+H/G}{2}\right)^{c_1}.
\end{equation}
Similarly,
$$\left(\frac{1+\cosh(x)}{2}\right)^{2/3}<\frac{\sinh(x)}{x}<
\left(\frac{1+\cosh(x)}{2}\right), \quad x>0,$$
gives
\begin{equation}\label{ineq0208e}
\left(\frac{1+G/H}{2}\right)^{2/3}<\frac{L}{G}<
\left(\frac{1+G/H}{2}\right).
\end{equation}
Now the first and the third inequality in \eqref{ineq0208c} are obvious from \eqref{ineq0208d} and \eqref{ineq0208d}. For the proof of the second inequality in \eqref{ineq0208c}, it is enough to prove that 
$$\frac{2}{1+x}<\frac{1+1/x}{2},\quad x>1,$$ 
which holds true, because it can be simplified as
$$(1-x)^2>0.$$
This completes the proof of theorem.
$\hfill\square$

%\begin{theorem}\label{0208thma} For $a,b>0$, we have
%\begin{equation}\label{ineq0208c}
%\frac{G}{L}<\left(\frac{2}{1+A/G}\right)^{2/3}<
%\left(\frac{1+G/A}{2}\right)^{2/3}<\frac{P}{A}.
%\end{equation}
%\end{theorem}
%
%\begin{proof}
%By Lemma \ref{lemma1}, the following inequality
%$$\left(\frac{1+\cos(x)}{2}\right)^{2/3}<\frac{\sin(x)}{x}<
%\left(\frac{1+\cos(x)}{2}\right)^{1/c_1}, \quad c_1=\log2/(\log(\pi/2)),\quad x\in(0,\frac{\pi}{2}),$$
%implies
%\begin{equation}\label{ineq0208d}
%\left(\frac{1+H/G}{2}\right)^{2/3}<\frac{P}{A}<
%\left(\frac{1+H/G}{2}\right)^{c_1}.
%\end{equation}
%Similarly,
%$$\left(\frac{1+\cosh(x)}{2}\right)^{2/3}<\frac{\sinh(x)}{x}<
%\left(\frac{1+\cosh(x)}{2}\right), \quad x>0,$$
%gives
%\begin{equation}\label{ineq0208e}
%\left(\frac{1+G/H}{2}\right)^{2/3}<\frac{L}{G}<
%\left(\frac{1+G/H}{2}\right).
%\end{equation}
%Now the first and the third inequality in \ref{ineq0208c} are obvious from \eqref{ineq0208d} and \eqref{ineq0208d}. For the proof of the second inequality, it is enough to prove that 
%$$\frac{2}{1+x}<\frac{1+1/x}{2},\quad x>1.$$ 
%The above inequality holds true, because it can be simplified as
%$$(1-x)^2>0.$$
%This completes the proof of the theorem.
%\end{proof}

\begin{corollary} For $a,b>0$ with $a\neq b$, we have
$$\frac{I}{L}<\frac{L}{G}<1+\frac{G}{H}-\frac{I}{G}.$$
\end{corollary}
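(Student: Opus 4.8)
The plan is to prove the two inequalities separately, working throughout in the hyperbolic variable $y$ supplied by Lemma~\ref{lemma1}, namely $L/G=\sinh(y)/y$, $G/H=\cosh(y)$, and $\log(I/G)=A/L-1$. Since $A/L=1/(L/A)=y/\tanh(y)=y\coth(y)$, the last relation gives $I/G=e^{y\coth(y)-1}$, so every quantity occurring in the corollary has a clean hyperbolic form.

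Left inequality. The estimate $I/L<L/G$ is equivalent, after cross-multiplying the positive quantities, to $IG<L^2$, i.e. to $L>\sqrt{IG}$, which is exactly \eqref{ineq6}. Equivalently, in the hyperbolic variable it reads $e^{y\coth(y)-1}<(\sinh(y)/y)^2$, which is precisely the squared form of \eqref{ineq5}; thus the left inequality needs nothing new and may simply be quoted.

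Right inequality. Using the same substitutions, $L/G<1+G/H-I/G$ becomes the hyperbolic inequality
$$\frac{\sinh(y)}{y}<1+\cosh(y)-e^{y\coth(y)-1},\qquad y>0,$$
which is precisely the hyperbolic counterpart of the left-hand inequality of Lemma~\ref{lemma3}. I would prove it by establishing the hyperbolic analog of that lemma: the function
$$g(y)=\frac{\sinh(y)}{y\left(1+\cosh(y)-e^{y\coth(y)-1}\right)}$$
is strictly decreasing on $(0,\infty)$ with $g(0^{+})=1$, whence $g(y)<1$, which is the claim. The monotonicity is obtained along the lines of Lemma~\ref{lemma3}: one differentiates $g$ (equivalently $\log g$) and reduces the sign of the derivative, via an auxiliary logarithmic function as in that proof, to the single hyperbolic inequality
$$\tilde c(y)=y\left(\coth(y)+\frac{y}{\sinh(y)^{2}}\right)>2,\qquad y>0,$$
the exact hyperbolic analog of the key step $c(x)>2$ used inside Lemma~\ref{lemma2}.

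This last inequality is itself clean: multiplying by $\sinh(y)^{2}$ shows $\tilde c(y)>2$ is equivalent to $\sinh(y)/y<(y+\sinh(y)\cosh(y))/(2\sinh(y))$, and by the hyperbolic Cusa--Huygens inequality \eqref{028a} it suffices to check $(2+\cosh(y))/3\le (y+\sinh(y)\cosh(y))/(2\sinh(y))$, i.e. $\rho(y):=4\sinh(y)-3y-\sinh(y)\cosh(y)\le0$; since $\rho(0)=0$ and $\rho'(y)=-2(\cosh(y)-1)^{2}\le0$, this holds for all $y>0$ (mirroring the trigonometric reduction to $(\cos x-1)^2>0$). The main obstacle is therefore not $\tilde c>2$ but the bookkeeping in the derivative of $g$: one must verify that the sign comes out so that $g$ is \emph{decreasing}, the opposite direction to the trigonometric case of Lemma~\ref{lemma3} where $f_4$ increases. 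This reversal is exactly the reflection of the flip between the Adamovi\'c--Mitrinovi\'c/Cusa--Huygens inequalities and their hyperbolic versions, and along the way one also records that the denominator $1+\cosh(y)-e^{y\coth(y)-1}$ is positive on $(0,\infty)$, so that $g$ is well defined.
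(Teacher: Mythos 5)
Your proposal is correct, and for the left inequality it coincides with the paper: both reduce $I/L<L/G$ to $L>\sqrt{IG}$, i.e.\ to \eqref{ineq6}. For the right inequality you take a genuinely different route. The paper derives it from the claim that $x\mapsto(1-e^{x/\tanh(x)-1})/(1-\cosh(x))$ is strictly decreasing from $(0,\infty)$ onto $(0,1)$ (``analogue of Lemma~\ref{lemma2}''), whereas you prove the hyperbolic analogue of Lemma~\ref{lemma3}, showing that
$$g(y)=\frac{\sinh(y)}{y\left(1+\cosh(y)-e^{y\coth(y)-1}\right)}$$
decreases from its limit $1$ at $0^{+}$. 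Your route is the more robust of the two: the function the paper invokes in fact increases from $2/3$ at $0$ to $2/e$ at $\infty$ (expand $e^{y\coth(y)-1}-1$ and $\cosh(y)-1$), so it is neither decreasing nor onto $(0,1)$, and neither of the resulting endpoint bounds on $e^{y\coth(y)-1}$ implies the corollary; the target inequality $\sinh(y)/y<1+\cosh(y)-e^{y\coth(y)-1}$ is extremely tight, the two sides agreeing through order $y^{4}$ and separating only at order $y^{6}$. Your computation, by contrast, checks out in every detail: the derivative of $g$ factors exactly as in Lemma~\ref{lemma3}, namely as $(y-\sinh y)\bigl(\cosh y+1-(y+\sinh y)e^{y\coth(y)-1}/\sinh y\bigr)$ over a positive quantity; the auxiliary logarithmic function has derivative $(2-\tilde c(y))/(y+\sinh y)<0$ by your inequality $\tilde c(y)>2$, which you correctly obtain from hyperbolic Cusa--Huygens \eqref{028a} together with $\rho'(y)=-2(\cosh(y)-1)^{2}\le 0$; and the sign reversal caused by $y-\sinh y<0$ (versus $x-\sin x>0$ in the trigonometric case) is precisely what makes $g$ decreasing rather than increasing. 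You also rightly note that the positivity of the bracketed factor yields the positivity of the denominator, so $g$ is well defined. In a written-up version you would only need to display the derivative computation explicitly; conceptually nothing is missing.
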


\begin{proof} The first inequality is due to Alzer \cite{alzer2}, while the second inequality follows from the fact that the function
$$x\mapsto \frac{1-e^{x/\tanh(x)-1}}{1-\cosh(x)}:(0,\infty)\to (0,1)$$
is strictly decreasing. The proof of the monotonicity of the function is the analogue to the proof of Lemma \ref{lemma2}.
\end{proof}

\noindent{\bf Proof of Theorem \ref{thm30}.} The proof follows easity from Lemma \ref{lemma5}.
$\hfill\square$

\bigskip

In \cite{Seif2}, Seiffert proved that
\begin{equation}\label{seifineq}
\frac{2}{\pi}A<P,
\end{equation}
for all $a,b>0$ with $a\neq 0$.
As a counterpart of the above result we give the following inequalities.

\begin{corollary}\label{coro89} For $a,b>0$ with $a\neq b$, the following inequalities 
$$\frac{1}{e}A<\frac{\pi}{2e} P <X<P$$
holds true.
\end{corollary}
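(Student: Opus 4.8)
The plan is to establish the chain $\frac{1}{e}A<\frac{\pi}{2e}P<X<P$ by handling its three individual inequalities separately, leaning on the already-proved Theorem \ref{thm30} and the trigonometric substitution of Lemma \ref{lemma1}. The middle inequality $\frac{\pi}{2e}P<X$ is the heart of the matter; the outer two are comparatively routine. I would first record that, by Lemma \ref{lemma1}, one has $P/A=\sin(x)/x$ and $X/A=e^{x\cot(x)-1}$ for a suitable $x\in(0,\pi/2)$, so that every inequality in the statement translates into a purely trigonometric inequality on $(0,\pi/2)$.

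For the rightmost inequality $X<P$, I would simply invoke Theorem \ref{thm30}, whose first part gives $(AX)^{1/\alpha_2}<P$ with $\alpha_2=2$, i.e. $\sqrt{AX}<P$; combined with $X<A$ (which follows from $x\cot(x)<1$ on $(0,\pi/2)$, equivalently from the left inequality of \eqref{30ineqa}) this yields $X<\sqrt{AX}<P$. Alternatively $X<P$ is already contained in part (1) of Theorem \ref{sandortheorem}, so it may be quoted directly.

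The central inequality $\frac{\pi}{2e}P<X$ is where the real work lies, and I would derive it from the second half of Theorem \ref{thm30}, namely $P<(AX^{\beta_2})^{1/(1+\beta_2)}$. Taking logarithms and using $P/A=\sin(x)/x$, $X/A=e^{x\cot(x)-1}$, this is equivalent to the sharp trigonometric bound established through Lemma \ref{lemma5}. To get the clean constant $\pi/(2e)$, however, I expect the most efficient route is to evaluate the endpoint behaviour directly: the quantity $X/P=(X/A)/(P/A)=x\,e^{x\cot(x)-1}/\sin(x)$ is a function of $x$ on $(0,\pi/2)$ whose infimum I would locate at the endpoint $x\to\pi/2$, where $\cot(x)\to0$ and $\sin(x)\to1$, giving the limiting value $(\pi/2)e^{-1}=\pi/(2e)$. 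The main obstacle, therefore, is showing that $g(x)=x\,e^{x\cot(x)-1}/\sin(x)$ is monotone (decreasing) on $(0,\pi/2)$, so that $g(x)>g(\pi/2^-)=\pi/(2e)$; this monotonicity is essentially the content bundled into Lemmas \ref{lemma2}, \ref{lemma3}, and \ref{lemma5}, and I would cite whichever of these furnishes it most directly rather than reprove it.

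Finally, the leftmost inequality $\frac{1}{e}A<\frac{\pi}{2e}P$ reduces at once to Seiffert's inequality \eqref{seifineq}, $\frac{2}{\pi}A<P$: multiplying through by $\pi/(2e)$ gives precisely $\frac{1}{e}A<\frac{\pi}{2e}P$. Assembling the three pieces yields the full chain, so that the corollary follows with no further estimation. The only genuinely delicate point is the monotonicity argument underlying the middle inequality, and because that monotonicity has already been developed in the preliminary lemmas, I anticipate the proof can be presented very compactly, essentially as a reference to Theorem \ref{thm30} and \eqref{seifineq} followed by the endpoint evaluation.
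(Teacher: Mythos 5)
Your decomposition is the paper's: the left inequality from Seiffert's bound \eqref{seifineq}, and the other two from the behaviour of the single function $X/P=x\,e^{x\cot(x)-1}/\sin(x)$ on $(0,\pi/2)$, which decreases from $1$ (at $0$) to $\pi/(2e)$ (at $\pi/2$), giving $\pi/(2e)<X/P<1$ in one stroke. The one soft spot is that you leave the monotonicity of $g(x)=x\,e^{x\cot(x)-1}/\sin(x)$ unproved, asserting it is ``bundled into'' Lemmas \ref{lemma2}, \ref{lemma3} and \ref{lemma5}; none of those lemmas actually states or directly implies it (Lemma \ref{lemma5} controls the ratio of the logarithms of $A/P$ and $P/X$, not the monotonicity of $X/P$ itself). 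The gap is real but trivial to close, and the paper closes it in one line: $\bigl(\log g(x)\bigr)'=\frac{1}{x}-\frac{x}{\sin(x)^2}<0$ since $\sin(x)<x$, whence $g'(x)=\frac{e^{x\cot(x)-1}}{\sin(x)}\left(1-\frac{x^2}{\sin(x)^2}\right)<0$. You should supply this computation rather than a citation.

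Worth noting: the first route you sketch for the middle inequality actually works on its own and needs no monotonicity at all. From Theorem \ref{thm30}, $P^{1+\beta_2}<AX^{\beta_2}$, and Seiffert's inequality gives $A<\frac{\pi}{2}P$; combining, $P^{\beta_2}<\frac{\pi}{2}X^{\beta_2}$, i.e. $P<(\pi/2)^{1/\beta_2}X$, and since $\beta_2=\log(\pi/2)/\log(2e/\pi)$ one has $(\pi/2)^{1/\beta_2}=2e/\pi$ exactly, which is $\frac{\pi}{2e}P<X$. Had you carried that computation through, you would have a complete proof genuinely different from (and arguably more ``corollary-like'' than) the paper's direct differentiation.
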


\begin{proof} The first inequality follows from \eqref{seifineq}. For the proof of the second and the third inequality we write by Lemma \ref{lemma1}
$$f'_5(x)=\frac{X}{P}=\frac{xe^{x/\tan(x)-1}}{\sin(x)}=f_5(x)$$
for $x\in(0,\pi/2)$. Differentiation gives
$$\frac{e^{x/tan(x)-1}}{\sin(x)}\left(1-\frac{x^2}{\sin(x)^2}\right)<0.$$ Hence the function $f_5$ is strictly decreasing in $x$, with 
$$\lim_{x\to 0}f_5(x)=1 \quad {\rm and}\quad \lim_{x\to \pi/2}f_5(x)=\pi/(2e)\approx 
0.5779.$$ This implies the proof.
\end{proof}

\begin{theorem} For $a,b>0$ with $a\neq b$, we have
$$L<M_{1/3}<X<P.$$
\end{theorem}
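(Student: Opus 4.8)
The plan is to break the chain $L<M_{1/3}<X<P$ into its three links and to dispatch each separately, since two of them are already available in the preceding development and the third is the classical Lin inequality comparing $L$ with the power mean of order $1/3$.

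The rightmost link $X<P$ requires no new work: it is precisely the third inequality of Corollary \ref{coro89}, where the ratio $f_5(x)=X/P=xe^{x/\tan(x)-1}/\sin(x)$ was shown, through Lemma \ref{lemma1}, to be strictly decreasing on $(0,\pi/2)$ with $\lim_{x\to0}f_5(x)=1$, whence $X/P<1$ for $a\neq b$. The middle link $M_{1/3}<X$ is equally immediate: the sharp power-mean bounds of Chu et al.\ recorded in \eqref{89ineq} hold in particular at the boundary exponent $p=1/3$, and this gives exactly $M_{1/3}<X$. (A direct trigonometric check via $X/A=e^{x\cot(x)-1}$ is possible but leads to a far less transparent one-variable inequality, so I would simply invoke \eqref{89ineq}.)

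It remains to prove the leftmost link $L<M_{1/3}$, the only part genuinely independent of the paper's machinery. I would use homogeneity to normalize $b=1$, exploit the symmetry $a\mapsto 1/a$ to assume $a>1$, and substitute $a=u^3$ with $u>1$; the inequality $L<M_{1/3}$ then reduces to the single-variable statement
$$\psi(u):=\log u-\frac{8(u^3-1)}{3(u+1)^3}>0.$$
Since $\psi(1)=0$, it suffices to show $\psi'(u)>0$ for $u>1$, and a short computation gives
$$\psi'(u)=\frac{1}{u}-\frac{8(u^2+1)}{(u+1)^4}=\frac{(u+1)^4-8u(u^2+1)}{u(u+1)^4}.$$
The entire difficulty collapses to the elementary identity $(u+1)^4-8u(u^2+1)=(u-1)^4$, which exhibits $\psi'(u)=(u-1)^4/\big(u(u+1)^4\big)>0$ for $u\neq1$; hence $\psi$ is strictly increasing and $\psi(u)>0$ for $u>1$. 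Chaining the three links then yields $L<M_{1/3}<X<P$. The only point needing genuine care is this last reduction together with the factorization identity, everything else being a direct appeal to \eqref{89ineq} and Corollary \ref{coro89}; I note that the resulting fifth-order vanishing of $\psi$ at $u=1$ is precisely what reflects the sharpness of the exponent $1/3$ in Lin's inequality.
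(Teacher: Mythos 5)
Your proposal is correct, and its overall decomposition is exactly the paper's: the chain is split into three links, with $M_{1/3}<X$ obtained from the Chu--Yang--Wu bounds \eqref{89ineq} at the endpoint $p=1/3$ and $X<P$ from Corollary \ref{coro89}, which is precisely what the paper does. The only divergence is the first link $L<M_{1/3}$: the paper disposes of it with a bare citation to Lin \cite{lin}, whereas you supply a self-contained elementary proof. Your reduction is sound: after normalizing $b=1$, $a=u^3$, $u>1$, one has $L=(u^3-1)/(3\log u)$ and $M_{1/3}=(u+1)^3/8$, so the claim is equivalent to $\psi(u)=\log u-8(u^3-1)/(3(u+1)^3)>0$; the derivative computation is right, since
$$\frac{d}{du}\frac{u^3-1}{(u+1)^3}=\frac{3(u^2+1)}{(u+1)^4},$$
and the factorization $(u+1)^4-8u(u^2+1)=u^4-4u^3+6u^2-4u+1=(u-1)^4$ checks out, giving $\psi'(u)=(u-1)^4/\bigl(u(u+1)^4\bigr)>0$ and hence $\psi>0$ on $(1,\infty)$. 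What your version buys is independence from the external reference and a transparent explanation of why $1/3$ is the critical exponent (the fifth-order tangency at $u=1$); what the paper's version buys is brevity. Both are valid.
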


\begin{proof} For the first inequality see \cite{lin}. The second and third inequality follows from \eqref{89ineq} and Corollary \ref{coro89}, respectively.
\end{proof}

%\begin{question} What are the best possible constants $r$ and $s$, such that
%$$\sqrt[r]{GH}<Y<\sqrt[s]{GH}.$$
%\end{question}
%%%%%%%%%%%%%%%%%%%%%%%%%%%%%%%%%%%%%% References %%%%%%%%%%%%%%%%%%%%%%%%%%%%%%%%%%%%%%%%%%%%
%%%%%%%%%%%%%%%%%%%%%%%%%%%%%%%%%%%%%%%%%%%%%%%%%%%%%%%%%%%%%%%%%%%%%%%%%%%%%%%%%%%%%%%%%%%%%%

\vspace{.5cm}

\end{document}